\documentclass[12pt]{amsart}
\usepackage{amssymb}
\usepackage[all]{xy}
\usepackage{tikz}
\usetikzlibrary {positioning}
\usepackage{verbatim}
\usepackage[draft]{todonotes}
\usepackage{amsmath}
\usepackage{amsthm}
\usepackage{mathrsfs}
\usetikzlibrary{arrows}
\usepackage{blindtext}
\newcounter{diagram}
\numberwithin{diagram}{section}

\newenvironment{diagram}
  {\stepcounter{diagram}\par\smallskip\noindent\begin{minipage}{\linewidth}\centering}
  {\par Diagram~\thediagram\end{minipage}\par\smallskip}
\input xy
\xyoption{all}

\textheight 22cm 
\textwidth 16cm \hoffset -1.5cm

\newtheorem{lemma}{Lemma}[section]

\newtheorem{theorem}[lemma]{Theorem}
\newtheorem{proposition}[lemma]{Proposition}

\theoremstyle{definition}
\newtheorem{remark}[lemma]{Remark}
\newtheorem{definition}[lemma]{Definition}

\DeclareMathOperator{\modd}{mod}
\DeclareMathOperator{\Endd}{End}

\DeclareMathOperator{\Hom}{Hom}

\DeclareMathOperator{\Ext}{Ext}

\DeclareMathOperator{\Ker}{Ker}
\DeclareMathOperator{\Imm}{Im}

\newtheorem{question}[lemma]{Question}
\newtheorem*{theorem a*}{Theorem A}
\newtheorem*{theorem b*}{Theorem B}

\begin{document}

\title{representation of $n$-abelian categories in abelian categories}

\author{Ramin Ebrahimi}
\address{Department of Pure Mathematics\\
Faculty of Mathematics and Statistics\\
University of Isfahan\\
Isfahan 81746-73441, Iran}
\email{ramin69@sci.ui.ac.ir}

\author{Alireza Nasr-Isfahani}
\address{Department of Pure Mathematics\\
Faculty of Mathematics and Statistics\\
University of Isfahan\\
Isfahan 81746-73441, Iran\\ and School of Mathematics, Institute for Research in Fundamental Sciences (IPM), P.O. Box: 19395-5746, Tehran, Iran}
\email{nasr$_{-}$a@sci.ui.ac.ir / nasr@ipm.ir}

\subjclass[2010]{{18E10}, {18E20}, {18E99}}

\keywords{abelian category, $n$-abelian category, embedding theorem, homological algebra, higher homological algebra}

\begin{abstract}
We prove a higher-dimensional version of the Freyd-Mitchell embedding theorem for $n$-abelian categories. More precisely, for a positive integer $n$ and a small $n$-abelian category $\mathcal{M}$, we show that $\mathcal{M}$ is equivalent to a full subcategory of an abelian category $\mathcal{L}_2(\mathcal{M},\mathcal{G})$, where $\mathcal{L}_2(\mathcal{M},\mathcal{G})$ is the category of absolutely pure group valued functors over $\mathcal{M}$. We also show that $n$-kernels and $n$-cokernels in $\mathcal{M}$ are precisely exact sequences of $\mathcal{L}_2(\mathcal{M},\mathcal{G})$ with terms in $\mathcal{M}$.
\end{abstract}

\maketitle


\section{Introduction}
For a positive integer $n$, $n$-cluster tilting subcategories of abelian categories were introduced by Iyama \cite{I2, I1}(see also \cite{I3, I4} and \cite{JK}) to develop the higher-dimensional analogs of Auslander-Reiten theory.
In these subcategories kernels and cokernels don't necessarily exist and are replaced with $n$-kernels and $n$-cokernels. Recently, Jasso \cite{J} introduced $n$-abelian categories which are an axiomatization of $n$-cluster tilting subcategories. He proved that any $n$-cluster tilting subcategory of an abelian category is $n$-abelian.

Freyd in \cite{Fr2} asked the
following general question, "Given a category how nicely can it be
represented in an abelian category?" Note that in homological
algebra it is very convenient to have a concrete abelian category,
for that allows one to check the behavior of morphisms on actual
elements of the sets underlying the objects. Freyd in \cite[Chapter 7]{Fr}
proved that for every small abelian category there exists an
exact, covariant embedding into the category of abelian groups
(see also \cite{Lub}). This shows that if a statement concerning
exactness and commutativity of a diagram is true in the category
of abelian groups then it is true in a small abelian category.
If a small abelian category $\mathcal{A}$ has a projective generator
$P,$ then there is an exact fully faithful functor from
$\mathcal{A}$ to the category of modules over
$\Endd_{\mathcal{A}}(P)$ and if $\mathcal{A}$ has a generating set
of projectives, then there is an exact fully faithful functor from
$\mathcal{A}$ to the category of modules over a ring with several
objects \cite{Fr, Mi}. Mitchell in \cite[Theorem 4.4]{Mi2} proved that, "every
small abelian category admits a full, exact and covariant
embedding into a category of $R$-modules for some ring $R$". The
fullness of the embedding functor implies that the statement
concerning the existence of morphisms in a diagram in a small
abelian category is true providing that it is true in the category
of modules. Let $\mathcal{A}$ be a small abelian category, $\mathcal{G}$ be the category of all abelian groups and
$(\mathcal{A},\mathcal{G})$ be the category of additive group
valued functors. A left exact sequence is an exact sequence of the form $0\rightarrow A_1\rightarrow A_2\rightarrow A_3$. A functor $F\in (\mathcal{A},\mathcal{G})$ which carries left exact sequences to left exact sequences is called a left exact functor. Let $\mathcal{L}(\mathcal{A},\mathcal{G})$ be the
full subcategory of $(\mathcal{A},\mathcal{G})$ consisting of all
left exact functors. Gabriel proved that
$\mathcal{L}(\mathcal{A},\mathcal{G})$ is an abelian category with
an injective cogenerator \cite{G} and Mitchell proved that the Yoneda functor
$\mathcal{A}\rightarrow \mathcal{L}(\mathcal{A},\mathcal{G})$ is
exact and fully faithful \cite{Mi2}.

It is natural to extend the Freyd-Mitchell embedding theorem to $n$-abelian categories. Jasso in \cite[Theorem 3.20]{J} proved that $n$-abelian categories satisfying certain mild assumptions can be realized as $n$-cluster tilting subcategories of
abelian categories. Let $\mathcal{M}$ be a small $n$-abelian category with a generating set of projectives, $\mathcal{P}$ be the category of projective objects in $\mathcal{M}$ and $F:\mathcal{M}\longrightarrow\modd\mathcal{P}$ be the functor defined by $F(X)=\mathcal{M}(-,X)|_\mathcal{P}$. Jasso in \cite[Theorem 3.20]{J} proved that if there exists an exact duality $D:\modd\mathcal{P}\longrightarrow\modd\mathcal{P}^{op}$, then $F$ is fully faithful and the essential image of $F$ is an $n$-cluster tilting subcategory of the abelian category $\modd\mathcal{P}$. Kvamme in \cite{K} proved that the existence of the exact duality is unnecessary.

In this paper we show that any small $n$-abelian category can be embedded in an abelian category in such a way that the embedding functor is $n$-exact and reflects $n$-exact sequences. Since we want to prove the embedding theorem for a small $n$-abelian category, the Jasso's method in \cite[Theorem 3.20]{J} don't work in this situation and our method in this paper is different from Jasso's method in \cite[Theorem 3.20]{J}.

For a small $n$-abelian category $\mathcal{M}$ we consider the category of additive group valued functors $(\mathcal{M},\mathcal{G})$ and the representation functor $H:\mathcal{M}\rightarrow (\mathcal{M},\mathcal{G})$. A functor $F\in (\mathcal{M},\mathcal{G})$ is called a mono functor if it preserves monomorphisms. We denote by $\mathbb{M}(\mathcal{M},\mathcal{G})$, the full subcategory of $(\mathcal{M},\mathcal{G})$ consist of all mono functors. A subfunctor $F^{\prime}\subseteq F$ in $\mathbb{M}(\mathcal{M},\mathcal{G})$ is called a pure subfunctor if the quotient functor $\dfrac{F}{F^{\prime}}\in \mathbb{M}(\mathcal{M},\mathcal{G})$. A mono functor is called absolutely pure if whenever it appears as a subfunctor of a mono functor it is a pure subfunctor. We denote by $\mathcal{L}_2(\mathcal{M},\mathcal{G})\subseteq \mathbb{M}(\mathcal{M},\mathcal{G})$ the full subcategory of absolutely pure functors. By \cite[Theorem 7.31]{Fr}, $\mathcal{L}_2(\mathcal{M},\mathcal{G})$ is an abelian category and every object of $\mathcal{L}_2(\mathcal{M},\mathcal{G})$ has an injective envelope.

In the following theorem we give a characterization of absolutely pure functors. We refer the reader to Section 2 for the definition of left $n$-exact sequence. The following theorem is the higher analogue, on $n$-abelian categories, of \cite[Theorem 7.27]{Fr}, which is on abelian categories.
\newtheorem{thm}{Theorem}
\begin{theorem a*}$($See Theorem \ref{theoremc}$)$
A mono functor $M\in (\mathcal{M},\mathcal{G})$ is absolutely pure if and only if whenever $X^0\rightarrow X^1 \rightarrow \ldots \rightarrow X^n\rightarrow X^{n+1}$ is a left $n$-exact sequence in $\mathcal{M}$, then $0\rightarrow M(X^0)\rightarrow M(X^1)\rightarrow M(X^2)$ is an exact sequence of abelian groups.
\end{theorem a*}

The representation functor $H:\mathcal{M}\rightarrow (\mathcal{M},\mathcal{G})$ is only left $n$-exact but not $n$-exact (See Definition \ref{def1}). To fix this, we use the above theorem and replace $(\mathcal{M},\mathcal{G})$ with a subcategory $\mathcal{L}_2(\mathcal{M},\mathcal{G})$ of $(\mathcal{M},\mathcal{G})$. The functor $H$ sends the object $X\in\mathcal{M}$ to the functor $\mathcal{M}(X,-)$, which is a left $n$-exact functor. Then by the above theorem, $\mathcal{L}_2(\mathcal{M},\mathcal{G})$ contains the essential image of $H$.

Using these results, we prove the following theorem, which is the main result of this paper.

\begin{theorem b*}$($See Theorems \ref{theoremmain3} and \ref{theoremmain2}$)$
The full embedding functor $H:\mathcal{M}\rightarrow \mathcal{L}_2(\mathcal{M},\mathcal{G})$ is $n$-exact and reflects $n$-exact sequences.
\end{theorem b*}

Then we can describe each small $n$-abelian category $\mathcal{M}$ as a full subcategory of an abelian category $\mathcal{L}_2(\mathcal{M},\mathcal{G})$ such that left $n$-exact sequences, right $n$-exact sequences and $n$-exact sequences are precisely exact sequences of the abelian category $\mathcal{L}_2(\mathcal{M},\mathcal{G})$ with terms in $\mathcal{M}$. In $n$-abelian categories we have $n$-exact sequences and the above theorem shows that any statement concerning ($n$-)exactness of a finite diagram, commutativity of a finite diagram and the existence of morphisms in a finite diagram in a small $n$-abelian category is true providing that the corresponding statement is true in the abelian categories. This gives a useful connection between (higher-)homological properties of $n$-abelian categories and homological properties of abelian categories.

The paper is organized as follows. In Section 2 we recall the definitions of $n$-abelian categories and $n$-cluster tilting subcategories and recall some results that we need in the paper. In Section 3 we consider the category of functors from a small $n$-abelian category to the category of abelian groups, then we introduce the subcategory of mono functors and the subcategory of absolutely pure functors as in the classic case. We show that the subcategory of absolutely pure functors is an abelian category. In Section 4 we characterize absolutely pure functors and show that the representation functor from a small $n$-abelian category to the category of absolutely pure functors is a full embedding which preserves and reflects $n$-exactness. Finally in Section 5 we give some applications of our main result and prove some homological results for a small $n$-abelian category.

\subsection{Notation}
Throughout this paper, unless otherwise stated, $n$ always denotes a fixed positive integer, $\mathcal{M}$ is a fixed small $n$-abelian category and $\mathcal{G}$ is the abelian category of all abelian groups.


\section{$n$-abelian categories}
In this section we recall the definition of $n$-abelian category and recall some results that we need in the rest of the paper. For further information and motivation of definitions the readers are referred to \cite{I1, I2, J}.

\subsection{$n$-abelian categories}
Let $\mathcal{M}$ be an additive category and $f:A\rightarrow B$ a morphism in $\mathcal{M}$. A weak cokernel of $f$ is a morphism $g:B\rightarrow C$ such that for all $C^{\prime} \in \mathcal{M}$  the sequence of abelian groups
\begin{equation}
\mathcal{M}(C,C')\overset{(g,C')}{\longrightarrow} \mathcal{M}(B,C')\overset{(f,C')}{\longrightarrow} \mathcal{M}(A,C') \notag
\end{equation}
is exact. The concept of weak kernel is defined dually.

Let $d^0:X^0 \rightarrow X^1$ be a morphism in $\mathcal{M}$. An $n$-cokernel of $d^0$ is a sequence
\begin{equation}
(d^1, \ldots, d^n): X^1 \overset{d^1}{\rightarrow} X^2 \overset{d^2}{\rightarrow}\cdots \overset{d^{n-1}}{\rightarrow} X^n \overset{d^n}{\rightarrow} X^{n+1} \notag
\end{equation}
of objects and morphisms in $\mathcal{M}$ such that for all $Y\in \mathcal{M}$
the induced sequence of abelian groups
\begin{align}
0 \rightarrow \mathcal{M}(X^{n+1},Y) \rightarrow \mathcal{M}(X^n,Y) \rightarrow\cdots\rightarrow \mathcal{M}(X^1,Y) \rightarrow \mathcal{M}(X^0,Y) \notag
\end{align}
is exact. Equivalently, the sequence $(d^1, \ldots, d^n)$ is an $n$-cokernel of $d^0$ if for all $1\leq k\leq n-1$
the morphism $d^k$ is a weak cokernel of $d^{k-1}$, and $d^n$ is moreover a cokernel of $d^{n-1}$ \cite[Definition 2.2]{J}. The concept of $n$-kernel of a morphism is defined dually.
\begin{definition}(\cite[Definition 2.4]{L})
Let $\mathcal{M}$ be an additive category. A right $n$-exact sequence in $\mathcal{M}$ is a complex
\begin{equation}
X^0 \overset{d^0}{\rightarrow} X^1 \overset{d^1}{\rightarrow} \cdots \overset{d^{n-1}}{\rightarrow} X^n \overset{d^n}{\rightarrow} X^{n+1} \notag
\end{equation}
such that $(d^1, \ldots, d^{n})$ is an $n$-cokernel of $d^0$. The concept of left $n$-exact sequence is defined dually. An $n$-exact sequence is a sequence which is both a right $n$-exact sequence and a left $n$-exact sequence.
\end{definition}

Let $\mathcal{M}$ be a category and $A$ be an object of $\mathcal{M}$. A morphism $e\in \mathcal{M}(A, A)$ is idempotent if $e^2 = e$. $\mathcal{M}$ is called idempotent complete
if for every idempotent $e\in \mathcal{M}(A, A)$ there exist an object $B$ and morphisms $f\in\mathcal{M}(A, B)$ and
$g\in\mathcal{M}(B, A)$ such that $gf = e$ and $fg = 1_B$.

\begin{definition}$($\cite[Definition 3.1]{J}$)$
An $n$-abelian category is an additive category $\mathcal{M}$ which satisfies the following axioms:
\begin{itemize}
\item[(A0)]
The category $\mathcal{M}$ is idempotent complete.
\item[(A1)]
Every morphism in $\mathcal{M}$ has an $n$-kernel and an $n$-cokernel.
\item[(A2)]
For every monomorphism $d^0:X^0 \rightarrow X^1$ in $\mathcal{M}$ and for every $n$-cokernel $(d^1, \ldots, d^n)$ of $d^0$, the  following sequence is $n$-exact:
\begin{equation}
X^0 \overset{d^0}{\rightarrow} X^1 \overset{d^1}{\rightarrow} \cdots \overset{d^{n-1}}{\rightarrow} X^n \overset{d^n}{\rightarrow} X^{n+1} \notag.
\end{equation}
\item[(A3)]
For every epimorphism $d^n:X^n \rightarrow X^{n+1}$ in $\mathcal{M}$ and for every $n$-kernel $(d^0, \ldots, d^{n-1})$ of $d^n$, the  following sequence is $n$-exact:
\begin{equation}
X^0 \overset{d^0}{\rightarrow} X^1 \overset{d^1}{\rightarrow} \cdots \overset{d^{n-1}}{\rightarrow} X^n \overset{d^n}{\rightarrow} X^{n+1} \notag.
\end{equation}
\end{itemize}
\end{definition}

A subcategory $\mathcal{B}$ of an abelian category $\mathcal{A}$ is called cogenerating if for every object
$X\in \mathcal{A}$ there exists an object $Y\in\mathcal{B}$ and a monomorphism $X\rightarrow Y$. The concept of
generating subcategory is defined dually.

Let $\mathcal{A}$ be an additive category and $\mathcal{B}$ be a full subcategory of $\mathcal{A}$. $\mathcal{B}$ is called
covariantly finite in $\mathcal{A}$ if for every $A\in \mathcal{A}$ there exists an object $B\in\mathcal{B}$ and a morphism
$f : A\rightarrow B$ such that, for all $B'\in\mathcal{B}$, the sequence of abelian groups $\mathcal{A}(B, B')\rightarrow \mathcal{A}(A, B')\rightarrow 0$ is exact. The notion of contravariantly
finite subcategory of $\mathcal{A}$ is defined dually. A functorially
finite subcategory of $\mathcal{A}$ is a subcategory which is both covariantly and contravariantly finite
in $\mathcal{A}$.

\begin{definition}$($\cite[Definition 3.14]{J}$)$
Let $\mathcal{A}$ be an abelian category and $\mathcal{M}$ be a generating-cogenerating full subcategory of $\mathcal{A}$. $\mathcal{M}$ is called an $n$-cluster tilting subcategory of $\mathcal{A}$ if $\mathcal{M}$ is functorially finite in $\mathcal{A}$ and
\begin{align}
\mathcal{M}& = \{ X\in \mathcal{A} \mid \forall i\in \{1, \ldots, n-1 \}, \Ext^i(X,\mathcal{M})=0 \}\notag \\
                  & =\{ X\in \mathcal{A} \mid \forall i\in \{1, \ldots, n-1 \}, \Ext^i(\mathcal{M},X)=0 \}.\notag
\end{align}

Note that $\mathcal{A}$ itself is the unique 1-cluster tilting subcategory of $\mathcal{A}$.

\end{definition}

The following result gives a rich source of $n$-abelian categories.

\begin{theorem}$($\cite[Theorem 3.16]{J}$)$
Let $\mathcal{A}$ be an abelian category and $\mathcal{M}$ be an $n$-cluster tilting subcategory of $\mathcal{A}$. Then $\mathcal{M}$ is an $n$-abelian category.
\end{theorem}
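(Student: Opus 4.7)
The plan is to verify the four axioms (A0)--(A3) for $\mathcal{M}$, working inside $\mathcal{A}$ and exploiting both the $\Ext$-vanishing description of $\mathcal{M}$ and the fact that $\mathcal{M}$ is generating--cogenerating and functorially finite.

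For (A0), the two equalities defining an $n$-cluster tilting subcategory express $\mathcal{M}$ as an $\Ext^i$-orthogonal class ($1\le i\le n-1$) on both sides, and such orthogonal classes are stable under direct summands. Since $\mathcal{A}$ is idempotent complete, every idempotent in $\mathcal{M}(A,A)$ splits in $\mathcal{A}$, and the resulting summands lie in $\mathcal{M}$; hence $\mathcal{M}$ is idempotent complete.

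For (A1), given $d^0\colon X^0\to X^1$ in $\mathcal{M}$, I would build an $n$-cokernel iteratively. Let $C^0$ be the cokernel of $d^0$ in $\mathcal{A}$. Since $\mathcal{M}$ is cogenerating, choose a monomorphism $C^0\hookrightarrow X^2$ with $X^2\in\mathcal{M}$, and let $C^1$ be its cokernel in $\mathcal{A}$. Repeating this process $n-1$ times produces short exact sequences
\[
0\to C^{i-1}\to X^{i+1}\to C^{i}\to 0\qquad (1\le i\le n-1)
\]
in $\mathcal{A}$ with $X^2,\ldots,X^n\in\mathcal{M}$. Setting $X^{n+1}:=C^{n-1}$, a dimension-shifting argument applied to these sequences, together with $\Ext^k(\mathcal{M},\mathcal{M})=0$ for $1\le k\le n-1$, shows that $\Ext^k(X^{n+1},Y)=0$ and $\Ext^k(Y,X^{n+1})=0$ for all $Y\in\mathcal{M}$ and all $1\le k\le n-1$, whence $X^{n+1}\in\mathcal{M}$. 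Splicing the long $\Ext$ sequences coming from $\mathcal{A}(-,Y)$, $Y\in\mathcal{M}$, across the same vanishing yields
\[
0\to\mathcal{M}(X^{n+1},Y)\to\mathcal{M}(X^n,Y)\to\cdots\to\mathcal{M}(X^1,Y)\to\mathcal{M}(X^0,Y),
\]
so $(d^1,\ldots,d^n)$ is an $n$-cokernel of $d^0$. Existence of $n$-kernels follows dually from the generating property.

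For (A2), if $d^0$ is in addition a monomorphism, then the concatenated short exact sequences above fit into a single exact sequence
\[
0\to X^0\to X^1\to X^2\to\cdots\to X^n\to X^{n+1}\to 0
\]
in $\mathcal{A}$. Applying $\mathcal{A}(Y,-)$ for $Y\in\mathcal{M}$ and splicing with $\Ext^k(Y,\mathcal{M})=0$ for $1\le k\le n-1$ shows $(d^0,\ldots,d^{n-1})$ is an $n$-kernel of $d^n$, so the sequence is $n$-exact. Axiom (A3) is strictly dual. The principal obstacle is the bookkeeping in the dimension-shifting argument at the two crucial points, namely showing $X^{n+1}\in\mathcal{M}$ and showing that the spliced $\Hom$-sequences are exact; both require the full two-sided $\Ext^k$-vanishing in the definition of an $n$-cluster tilting subcategory and a careful iteration through all $n-1$ short exact sequences.
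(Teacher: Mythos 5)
The paper does not prove this statement itself; it is quoted from Jasso \cite{J}, so your argument can only be measured against the standard proof there. Your overall strategy (verify (A0)--(A3), build the $n$-cokernel by repeatedly embedding cokernels into objects of $\mathcal{M}$, and identify the last term via the $\Ext$-orthogonality description) is the right one, and your treatment of (A0) is fine.

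There is, however, a genuine gap in (A1), and it propagates through everything that follows: you choose the monomorphism $C^{i-1}\hookrightarrow X^{i+1}$ using only the cogenerating property, but an arbitrary such monomorphism yields neither the weak-cokernel property nor the $\Ext$-vanishing you need. Concretely, for $d^1$ to be a weak cokernel of $d^0$, every morphism $C^0\to Y$ with $Y\in\mathcal{M}$ must extend along $C^0\hookrightarrow X^2$, i.e.\ $\mathcal{A}(X^2,Y)\to\mathcal{A}(C^0,Y)$ must be surjective; and your dimension shift reduces $\Ext^k(X^{n+1},Y)$ to $\Ext^1(C^{n-k},Y)$, which sits in the exact sequence $\mathcal{A}(X^{n-k+1},Y)\to\mathcal{A}(C^{n-k-1},Y)\to\Ext^1(C^{n-k},Y)\to\Ext^1(X^{n-k+1},Y)=0$ and therefore vanishes only when the same surjectivity holds one step earlier. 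Both requirements say precisely that $C^{i-1}\hookrightarrow X^{i+1}$ is a left $\mathcal{M}$-approximation, and this is where functorial finiteness --- which your proof never invokes --- must enter: take a left $\mathcal{M}$-approximation $C^{i-1}\to M$ (covariant finiteness) and a monomorphism $C^{i-1}\to M'$ with $M'\in\mathcal{M}$ (cogenerating), and use the resulting monic left approximation $C^{i-1}\to M\oplus M'$. Two smaller inaccuracies: the dimension shift only gives $\Ext^k(X^{n+1},\mathcal{M})=0$ directly (the vanishing of $\Ext^k(\mathcal{M},X^{n+1})$ follows a posteriori from the first defining equality once $X^{n+1}\in\mathcal{M}$ is established, not by shifting along your sequences, where the degrees run the wrong way); and (A2) quantifies over \emph{every} $n$-cokernel of $d^0$, so after handling the constructed one you still need the standard reduction that any two $n$-cokernels of the same morphism are homotopy equivalent.
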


Let $\mathcal{M}$ be an additive category and consider the following morphism of complexes
\begin{center}
\begin{tikzpicture}
\node (X1) at (-4,1) {$X$};
\node (X2) at (-2,1) {$X^0$};
\node (X3) at (0,1) {$X^1$};
\node (X4) at (2,1) {$\ldots$};
\node (X5) at (4,1) {$X^{n-1}$};
\node (X6) at (6,1) {$X^n$};
\node (X7) at (-4,-1) {$Y$};
\node (X8) at (-2,-1) {$Y^0$};
\node (X9) at (0,-1) {$Y^1$};
\node (X10) at (2,-1) {$\ldots$};
\node (X11) at (4,-1) {$Y^{n-1}$};
\node (X12) at (6,-1) {$Y^n$};
\draw [->,thick] (X1) -- (X7) node [midway,left] {$f$};
\draw [->,thick] (X2) -- (X8) node [midway,left] {$f^0$};
\draw [->,thick] (X3) -- (X9) node [midway,left] {$f^1$};
\draw [->,thick] (X5) -- (X11) node [midway,left] {$f^{n-1}$};
\draw [->,thick] (X6) -- (X12) node [midway,left] {$f^n$};
\draw [->,thick] (X2) -- (X3) node [midway,above] {$d_X^0$};
\draw [->,thick] (X3) -- (X4) node [midway,above] {$d_X^1$};
\draw [->,thick] (X4) -- (X5) node [midway,above] {$d_X^{n-2}$};
\draw [->,thick] (X5) -- (X6) node [midway,above] {$d_X^{n-1}$};
\draw [->,thick] (X8) -- (X9) node [midway,above] {$d_Y^0$};
\draw [->,thick] (X9) -- (X10) node [midway,above] {$d_Y^1$};
\draw [->,thick] (X10) -- (X11) node [midway,above] {$d_Y^{n-2}$};
\draw [->,thick] (X11) -- (X12) node [midway,above] {$d_Y^{n-1}$};
\end{tikzpicture}
\end{center}
Recall that the mapping cone $C(f)$ of $f$ is the following complex
\begin{equation}
X^0\overset{d_C^0}{\longrightarrow} X^1\oplus Y^0 \overset{d_C^1}{\longrightarrow} \cdots \overset{d_C^{n-1}}{\longrightarrow} X^n\oplus Y^{n-1}\overset{d_C^n}{\longrightarrow} Y^n \notag,
\end{equation}
where $$d_C^k=\left(\begin{array}{cc}
-d_X^k &0 \\
f^k &d_Y^{k-1}\\
\end{array}\right):X^k\oplus Y^{k-1}\longrightarrow X^{k+1}\oplus Y^k,$$ for each $0\leq k\leq n$.

\begin{definition}$($\cite[Definition 2.11]{J}$)$
Let $\mathcal{M}$ be an additive category, $X: X^0 \overset{d^0}{\rightarrow} X^1 \overset{d^1}{\rightarrow}\cdots \overset{d^{n-2}}{\rightarrow} X^{n-1}\overset{d^{n-1}}{\rightarrow} X^n$ a complex in $\mathcal{M}$ and $f^0:X^0\rightarrow Y^0$ a morphism in $\mathcal{M}$. An $n$-pushout diagram of $X$ along $f^0$ is a morphism of complexes
\begin{center}
\begin{tikzpicture}
\node (X1) at (-4,1) {$X$};
\node (X2) at (-2,1) {$X^0$};
\node (X3) at (0,1) {$X^1$};
\node (X4) at (2,1) {$\ldots$};
\node (X5) at (4,1) {$X^{n-1}$};
\node (X6) at (6,1) {$X^n$};
\node (X7) at (-4,-1) {$Y$};
\node (X8) at (-2,-1) {$Y^0$};
\node (X9) at (0,-1) {$Y^1$};
\node (X10) at (2,-1) {$\ldots$};
\node (X11) at (4,-1) {$Y^{n-1}$};
\node (X12) at (6,-1) {$Y^n$};
\draw [->,thick] (X1) -- (X7) node [midway,left] {$f$};
\draw [->,thick] (X2) -- (X8) node [midway,left] {$f^0$};
\draw [->,thick] (X3) -- (X9) node [midway,left] {};
\draw [->,thick] (X5) -- (X11) node [midway,left] {};
\draw [->,thick] (X6) -- (X12) node [midway,left] {};
\draw [->,thick] (X2) -- (X3) node [midway,above] {};
\draw [->,thick] (X3) -- (X4) node [midway,above] {};
\draw [->,thick] (X4) -- (X5) node [midway,above] {};
\draw [->,thick] (X5) -- (X6) node [midway,above] {};
\draw [->,thick] (X8) -- (X9) node [midway,above] {};
\draw [->,thick] (X9) -- (X10) node [midway,above] {};
\draw [->,thick] (X10) -- (X11) node [midway,above] {};
\draw [->,thick] (X11) -- (X12) node [midway,above] {};
\end{tikzpicture}
\end{center}
such that the mapping cone $C=C(f)$ of $f$ is a right $n$-exact sequence. The concept of $n$-pullback diagram is defined dually.
\end{definition}

\begin{theorem}$($\cite[Theorem 3.8]{J}$)$ \label{theorem1}
Let $\mathcal{M}$ be an additive category which satisfies axioms $(A0)$ and $(A1)$ of $n$-abelian category,
\begin{equation}
X: X^0 \overset{d^0}{\rightarrow} X^1 \overset{d^1}{\rightarrow}\cdots \overset{d^{n-1}}{\rightarrow} X^n  \notag
\end{equation}
a complex in $\mathcal{M}$ and $f^0:X^0\rightarrow Y^0$ a morphism in $\mathcal{M}$. Then the following statements hold:
\begin{itemize}
\item[(i)]
There exists an n-pushout diagram
\begin{center}
\begin{tikzpicture}
\node (X2) at (-2,1) {$X^0$};
\node (X3) at (0,1) {$X^1$};
\node (X4) at (2,1) {$\ldots$};
\node (X5) at (4,1) {$X^{n-1}$};
\node (X6) at (6,1) {$X^n$};
\node (X8) at (-2,-1) {$Y^0$};
\node (X9) at (0,-1) {$Y^1$};
\node (X10) at (2,-1) {$\ldots$};
\node (X11) at (4,-1) {$Y^{n-1}$};
\node (X12) at (6,-1) {$Y^n$};
\draw [->,thick] (X2) -- (X8) node [midway,left] {$f^0$};
\draw [->,thick] (X3) -- (X9) node [midway,left] {};
\draw [->,thick] (X5) -- (X11) node [midway,left] {};
\draw [->,thick] (X6) -- (X12) node [midway,left] {};
\draw [->,thick] (X2) -- (X3) node [midway,above] {$d_X^0$};
\draw [->,thick] (X3) -- (X4) node [midway,above] {};
\draw [->,thick] (X4) -- (X5) node [midway,above] {};
\draw [->,thick] (X5) -- (X6) node [midway,above] {};
\draw [->,thick] (X8) -- (X9) node [midway,above] {$d_Y^0$};
\draw [->,thick] (X9) -- (X10) node [midway,above] {};
\draw [->,thick] (X10) -- (X11) node [midway,above] {};
\draw [->,thick] (X11) -- (X12) node [midway,above] {};
\end{tikzpicture}
\end{center}
\item[(ii)]
Suppose, moreover, that $\mathcal{M}$ is an n-abelian category. If $d_X^0$ is a monomorphism, then $d_Y^0$ is also a monomorphism.
\end{itemize}
\end{theorem}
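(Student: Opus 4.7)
The plan is to prove (i) by inductively constructing the bottom complex $Y^0\to Y^1\to\cdots\to Y^n$ together with the chain map $f$ using (A1), and then to deduce (ii) from (i) by applying axiom (A2) to the mapping cone and performing a short diagram chase.

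For (i), I build $Y^1,\dots,Y^n$ one stage at a time. At stage $k$ with $1\le k\le n-1$, assuming $Y^0,\dots,Y^{k-1}$ and the maps $f^0,\dots,f^{k-1},d_Y^0,\dots,d_Y^{k-2}$ have been constructed so that all earlier squares commute and $Y$ is a complex so far, I apply (A1) to take a \emph{weak cokernel} $(f^k,d_Y^{k-1}):X^k\oplus Y^{k-1}\to Y^k$ of the matrix $\bigl(\begin{smallmatrix}-d_X^{k-1} & 0\\ f^{k-1} & d_Y^{k-2}\end{smallmatrix}\bigr)$ (for $k=1$, interpret $Y^{-1}=0$, so this is just $\bigl(\begin{smallmatrix}-d_X^0\\ f^0\end{smallmatrix}\bigr)$). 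The vanishing condition defining this weak cokernel unfolds to exactly $f^kd_X^{k-1}=d_Y^{k-1}f^{k-1}$ and $d_Y^{k-1}d_Y^{k-2}=0$, giving the next commuting square and the complex condition. At $k=n$ I instead take an honest cokernel, producing $d_C^n=(f^n,d_Y^{n-1})$. Writing $d_C^k=\bigl(\begin{smallmatrix}-d_X^k & 0\\ f^k & d_Y^{k-1}\end{smallmatrix}\bigr)$ for the cone differentials, I must verify that each $d_C^k$ is itself a weak cokernel of $d_C^{k-1}$ for $1\le k\le n-1$ and that $d_C^n$ is a genuine cokernel: a test map $(h_1,h_2):X^k\oplus Y^{k-1}\to T$ killing $d_C^{k-1}$ satisfies the same vanishing condition as above, so the weak cokernel property yields $\alpha:Y^k\to T$ with $\alpha f^k=h_1$ and $\alpha d_Y^{k-1}=h_2$; then $(0,\alpha):X^{k+1}\oplus Y^k\to T$ factors $(h_1,h_2)$ through $d_C^k$, as one checks by direct matrix multiplication. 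This shows the mapping cone $C(f)$ is right $n$-exact.

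For (ii), assume $\mathcal{M}$ is $n$-abelian and $d_X^0$ is a monomorphism. Then $d_C^0=\bigl(\begin{smallmatrix}-d_X^0\\ f^0\end{smallmatrix}\bigr)$ is also a monomorphism, because its first coordinate is. Since $C(f)$ is right $n$-exact by (i), axiom (A2) upgrades it to a full $n$-exact sequence, so in particular to a left $n$-exact one, whence $d_C^0$ is a weak kernel of $d_C^1$. To test monomorphy of $d_Y^0$, take $a:Z\to Y^0$ with $d_Y^0a=0$; then $d_C^1\bigl(\begin{smallmatrix}0\\ a\end{smallmatrix}\bigr)=\bigl(\begin{smallmatrix}0\\ d_Y^0a\end{smallmatrix}\bigr)=0$, so the weak kernel property produces $b:Z\to X^0$ with $d_C^0b=\bigl(\begin{smallmatrix}0\\ a\end{smallmatrix}\bigr)$, forcing $d_X^0b=0$ and $f^0b=a$. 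Monomorphy of $d_X^0$ then gives $b=0$, and hence $a=0$.

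The main obstacle is the shape-matching step in (i): the weak cokernels produced by (A1) have target $Y^k$ rather than $X^{k+1}\oplus Y^k$, so one has to recognize that the intended cone differential $d_C^k$ is again a weak cokernel of the previous one via the easy $(0,\alpha)$ factorization above. Once (i) is established, part (ii) is essentially formal: the entire content is to reduce a monomorphy question about $d_Y^0$ to one about $d_X^0$ by lifting along $d_C^0$, and this hinges crucially on (A2), which is what converts the right $n$-exactness built in (i) into the two-sided $n$-exactness needed for the lift.
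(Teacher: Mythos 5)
This theorem is not proved in the paper at all: it is quoted verbatim from Jasso \cite[Theorem 3.8]{J}, so there is no internal proof to compare against; I am therefore measuring your argument against Jasso's original one, which your construction for (i) essentially reproduces (iterated weak cokernels shaping the mapping cone) and whose proof of (ii) you reproduce exactly. Part (ii) of your proof is correct and complete, granted (i): the passage from right $n$-exactness of the cone plus monomorphy of $d_C^0=\bigl(\begin{smallmatrix}-d_X^0\\ f^0\end{smallmatrix}\bigr)$ through (A2) to the weak kernel property, and the lift of $\bigl(\begin{smallmatrix}0\\ a\end{smallmatrix}\bigr)$ along $d_C^0$, is exactly the intended argument. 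Likewise your verification in (i) that each $d_C^k$ inherits the weak cokernel property from $(f^k,d_Y^{k-1})$ via the $(0,\alpha)$ factorization is correct.

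The genuine gap is the final step of (i): ``at $k=n$ I instead take an honest cokernel.'' Axiom (A1) supplies $n$-cokernels, hence weak cokernels (the first term of an $n$-cokernel), but it does \emph{not} supply plain cokernels of arbitrary morphisms, and in fact for $n\geq 2$ a typical morphism in an $n$-cluster tilting subcategory $\mathcal{M}\subseteq\mathcal{A}$ has no cokernel in $\mathcal{M}$ at all: a categorical cokernel $B\to D$ of $g:A\to B$ in $\mathcal{M}$ would force the $\mathcal{A}$-cokernel of $g$ to be isomorphic to $D$ (using that $\mathcal{M}$ is cogenerating), hence to lie in $\mathcal{M}$, which generally fails. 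So the existence of a cokernel of $d_C^{n-1}$ is not free; what is true, and what you need, is that the terminal morphism of a chain of $n-1$ successive weak cokernels starting from $d_C^0$ does admit a cokernel, and that appending it produces an $n$-cokernel of $d_C^0$. This is a nontrivial lemma in Jasso's paper, proved by comparing your weak-cokernel chain with an actual $n$-cokernel of $d_C^0$ via the comparison lemma and then splitting idempotents --- which is precisely where the hypothesis (A0), nowhere used in your argument, enters. As written, your proof of (i) silently assumes cokernels exist, so this step must either invoke that lemma or reproduce the comparison-plus-idempotent-splitting argument.
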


In the following crucial proposition we give a necessarily and sufficient conditions for when a complex in an $n$-abelian category is a right $n$-exact sequence. One direction of the proposition has been proven in \cite[Proposition 3.13]{J} but we present again part of the argument for convenience of the reader. Note that in the proof we do not need to use the notion of good $n$-pushout diagram (see \cite[Definition-Proposition 2.14]{J} and \cite[Proposition 3.13]{J}).
\begin{proposition}\label{propositionmain}
Let $\mathcal{M}$ be an $n$-abelian category and
\begin{equation}
X: X^0 \overset{f^0}{\rightarrow} X^1 \overset{f^1}{\rightarrow} \cdots \overset{f^{n-1}}{\rightarrow} X^n \overset{f^n}{\rightarrow} X^{n+1} \notag
\end{equation}
a complex in $\mathcal{M}$. Then, for every $k \in \{0, 1, \ldots, n\}$ and every $l \in \{1, 2, \ldots, n\}$ there exist morphisms $g_k^l:Y_k^l\rightarrow Y_k^{l-1}$ (with $Y_k^0=X^k$) and $p_k^{l-1}:Y_k^{l-1}\rightarrow Y_{k+1}^l$ satisfying the following properties:
\begin{itemize}
\item[(i)]
For every $k \in \{0, 1, \ldots, n\}$ the diagram
\begin{center}
\begin{tikzpicture}
\node (X1) at (-4,1) {$Y_k^n$};
\node (X2) at (-2,1) {$Y_k^{n-1}$};
\node (X3) at (0,1) {$\ldots$};
\node (X4) at (2,1) {$Y_k^1$};
\node (X5) at (4,1) {$X^k$};
\node (X6) at (6,1) {$X^{k+1}$};
\node (X7) at (-4,-1) {$0$};
\node (X8) at (-2,-1) {$Y_{k+1}^n$};
\node (X9) at (0,-1) {$\ldots$};
\node (X10) at (2,-1) {$Y_{k+1}^2$};
\node (X11) at (4,-1) {$Y_{k+1}^1$};
\draw [->,thick] (X1) -- (X2) node [midway,above] {$g_k^n$};
\draw [->,thick] (X2) -- (X3) node [midway,above] {$g_k^{n-1}$};
\draw [->,thick] (X3) -- (X4) node [midway,above] {$g_k^2$};
\draw [->,thick] (X4) -- (X5) node [midway,above] {$g_k^1$};
\draw [->,thick] (X5) -- (X6) node [midway,above] {$f^k$};
\draw [->,thick] (X1) -- (X7) node [midway,above] {};
\draw [->,thick] (X2) -- (X8) node [midway,left] {$p_k^{n-1}$};
\draw [->,thick] (X4) -- (X10) node [midway,left] {$p_k^1$};
\draw [->,thick] (X5) -- (X11) node [midway,left] {$p_k^0$};
\draw [->,thick] (X7) -- (X8) node [midway,above] {};
\draw [->,thick] (X8) -- (X9) node [midway,above] {$g_{k+1}^n$};
\draw [->,thick] (X9) -- (X10) node [midway,above] {$g_{k+1}^3$};
\draw [->,thick] (X10) -- (X11) node [midway,above] {$g_{k+1}^2$};
\draw [->,thick] (X11) -- (X6) node [midway,right] {$g_{k+1}^1$};
\end{tikzpicture}
\end{center}
commutes.
\item[(ii)]
The sequence $(g_k^n, \ldots, g_k^1)$ is an $n$-kernel of $f^k$.
\item[(iii)]
The diagram
\begin{diagram}\label{dia3}
\centering
\begin{tikzpicture}
\node (X1) at (-4,1) {$Y_k^n$};
\node (X2) at (-2,1) {$Y_k^{n-1}$};
\node (X3) at (0,1) {$\ldots$};
\node (X4) at (2,1) {$Y_k^1$};
\node (X5) at (4,1) {$X^k$};
\node (X7) at (-4,-1) {$0$};
\node (X8) at (-2,-1) {$Y_{k+1}^n$};
\node (X9) at (0,-1) {$\ldots$};
\node (X10) at (2,-1) {$Y_{k+1}^2$};
\node (X11) at (4,-1) {$Y_{k+1}^1$};
\draw [->,thick] (X1) -- (X2) node [midway,above] {$g_k^n$};
\draw [->,thick] (X2) -- (X3) node [midway,above] {$g_k^{n-1}$};
\draw [->,thick] (X3) -- (X4) node [midway,above] {$g_k^2$};
\draw [->,thick] (X4) -- (X5) node [midway,above] {$g_k^1$};
\draw [->,thick] (X1) -- (X7) node [midway,above] {};
\draw [->,thick] (X2) -- (X8) node [midway,left] {$p_k^{n-1}$};
\draw [->,thick] (X4) -- (X10) node [midway,left] {$p_k^1$};
\draw [->,thick] (X5) -- (X11) node [midway,left] {$p_k^0$};
\draw [->,thick] (X7) -- (X8) node [midway,above] {};
\draw [->,thick] (X8) -- (X9) node [midway,above] {$g_{k+1}^n$};
\draw [->,thick] (X9) -- (X10) node [midway,above] {$g_{k+1}^3$};
\draw [->,thick] (X10) -- (X11) node [midway,above] {$g_{k+1}^2$};
\end{tikzpicture}
\end{diagram}
is an $n$-pullback diagram.
\end{itemize}
Moreover the morphism $[p_k^0, g_{k+1}^2]:X^k\oplus Y_{k+1}^2\rightarrow Y_{k+1}^1$ is an epimorphism for every $k$ if and only if the complex $X$ is a right $n$-exact sequence. In this case we can choose the objects $Y_k^l$, $1\leq l\leq n$ and morphisms $g_k^l$, $1\leq l\leq n$ in such a way that the Diagram 2.1 is both $n$-pullback and $n$-pushout diagram.
\begin{itemize}
\item[(iv)]
If the complex $X$ is a right $n$-exact sequence and $k\neq 0$, then the sequence $(g_k^{k-1}, \ldots, g_k^1, f^k, \ldots, f^n)$ is an $n$-cokernel of the morphism $g_k^k$.
\end{itemize}
\begin{proof}
Let
\begin{equation}
X: X^0\overset{f^0}{\longrightarrow}X^1\overset{f^1}{\longrightarrow}\cdots \overset{f^{n-1}}{\longrightarrow}X^n\overset{f^n}{\longrightarrow}X^{n+1} \notag
\end{equation}
be a right $n$-exact sequence. Since $f^n$ is an epimorphism, there exists an $n$-exact sequence
\begin{equation}
Y_n^n\overset{g_n^n}{\longrightarrow}Y_n^{n-1}\overset{g_n^{n-1}}{\longrightarrow}\cdots \overset{g_n^2}{\longrightarrow}Y_n^1\overset{g_n^1}{\longrightarrow}X^n\overset{f^n}{\longrightarrow}X^{n+1}. \notag
\end{equation}
This implies that the diagram
\begin{center}
\begin{tikzpicture}
\node (X1) at (-4,1) {$Y_n^n$};
\node (X2) at (-2,1) {$Y_n^{n-1}$};
\node (X3) at (0,1) {$\ldots$};
\node (X4) at (2,1) {$Y_n^1$};
\node (X5) at (4,1) {$X^n$};
\node (X6) at (-4,-1) {$0$};
\node (X7) at (-2,-1) {$0$};
\node (X8) at (0,-1) {$\ldots$};
\node (X9) at (2,-1) {$0$};
\node (X10) at (4,-1) {$X^{n+1}$};
\draw [->,thick] (X1) -- (X2) node [midway,above] {$g_n^n$};
\draw [->,thick] (X2) -- (X3) node [midway,above] {$g_n^{n-1}$};
\draw [->,thick] (X3) -- (X4) node [midway,above] {$g_n^2$};
\draw [->,thick] (X4) -- (X5) node [midway,above] {$g_n^1$};
\draw [->,thick] (X6) -- (X7) node [midway,above] {};
\draw [->,thick] (X7) -- (X8) node [midway,left] {};
\draw [->,thick] (X8) -- (X9) node [midway,left] {};
\draw [->,thick] (X9) -- (X10) node [midway,left] {};
\draw [->,thick] (X1) -- (X6) node [midway,above] {};
\draw [->,thick] (X2) -- (X7) node [midway,above] {};
\draw [->,thick] (X3) -- (X8) node [midway,above] {};
\draw [->,thick] (X4) -- (X9) node [midway,above] {};
\draw [->,thick] (X5) -- (X10) node [midway,right] {$f^n$};
\end{tikzpicture}
\end{center}
is both an $n$-pullback diagram and an $n$-pushout diagram. Now by induction assume that for $1\leq k\leq n$ we have the following commutative diagram with the required properties.
\begin{center}
\begin{tikzpicture}
\node (X1) at (-6,4) {$0$};
\node (X2) at (-4,4.) {$Y_{k-1}^n$};
\node (X3) at (-2,4) {$Y_{k-1}^{n-1}$};
\node (X4) at (0,4) {$\ldots$};
\node (X5) at (2,4) {$Y_{k-1}^2$};
\node (X6) at (4,4) {$Y_{k-1}^1$};
\node (X7) at (6,4) {$X^{k-1}$};
\node (X8) at (-6,2) {$0$};
\node (X9) at (-4,2) {$Y_k^n$};
\node (X10) at (-2,2) {$Y_k^{n-1}$};
\node (X11) at (0,2) {$\ldots$};
\node (X12) at (2,2) {$Y_k^2$};
\node (X13) at (4,2) {$Y_k^1$};
\node (X14) at (6,2) {$X^k$};
\node (X15) at (6,0) {$\vdots$};
\node (X16) at (-6,-2) {$0$};
\node (X17) at (-4,-2) {$Y_n^n$};
\node (X18) at (-2,-2) {$Y_n^{n-1}$};
\node (X19) at (0,-2) {$\ldots$};
\node (X20) at (2,-2) {$Y_n^2$};
\node (X21) at (4,-2) {$Y_n^1$};
\node (X22) at (6,-2) {$X^n$};
\node (X23) at (-6,-4) {$0$};
\node (X24) at (-4,-4) {$0$};
\node (X25) at (-2,-4) {$0$};
\node (X26) at (0,-4) {$\ldots$};
\node (X27) at (2,-4) {$0$};
\node (X28) at (4,-4) {$X^{n+1}$};
\node (X29) at (6,-4) {$X^{n+1}$};
\draw [->,thick,dotted] (X1) -- (X2) node [midway,above] {};
\draw [->,thick,dotted] (X2) -- (X3) node [midway,above] {$g_{k-1^n}$};
\draw [->,thick,dotted] (X3) -- (X4) node [midway,above] {$g_{k-1}^{n-1}$};
\draw [->,thick,dotted] (X4) -- (X5) node [midway,above] {$g_{k_1}^3$};
\draw [->,thick,dotted] (X5) -- (X6) node [midway,above] {$g_{k-1}^2$};
\draw [->,thick,dotted] (X6) -- (X7) node [midway,above] {$g_{k-1}^1$};
\draw [->,thick] (X8) -- (X9) node [midway,above] {};
\draw [->,thick] (X9) -- (X10) node [midway,above] {$g_k^n$};
\draw [->,thick] (X10) -- (X11) node [midway,above] {$g_k^{n-1}$};
\draw [->,thick] (X11) -- (X12) node [midway,above] {$g_k^3$};
\draw [->,thick] (X12) -- (X13) node [midway,above] {$g_k^2$};
\draw [->,thick] (X13) -- (X14) node [midway,above] {$g_k^1$};
\draw [->,thick] (X16) -- (X17) node [midway,above] {};
\draw [->,thick] (X17) -- (X18) node [midway,above] {$g_n^n$};
\draw [->,thick] (X18) -- (X19) node [midway,above] {$g_n^{n-1}$};
\draw [->,thick] (X19) -- (X20) node [midway,above] {$g_n^3$};
\draw [->,thick] (X20) -- (X21) node [midway,above] {$g_n^2$};
\draw [->,thick] (X21) -- (X22) node [midway,above] {$g_n^1$};
\draw [->,thick] (X23) -- (X24) node [midway,above] {};
\draw [->,thick] (X24) -- (X25) node [midway,above] {};
\draw [->,thick] (X25) -- (X26) node [midway,above] {};
\draw [->,thick] (X26) -- (X27) node [midway,right] {};
\draw [->,thick] (X27) -- (X28) node [midway,right] {};
\draw [double,-,thick] (X28) -- (X29) node [midway,right] {};
\draw [->,thick,dotted] (X2) -- (X8) node [midway,right] {};
\draw [->,thick,dotted] (X3) -- (X9) node [midway,left] {$p_{k-1}^{n-1}$};
\draw [->,thick,dotted] (X6) -- (X12) node [midway,left] {$p_{k-1}^{n-2}$};
\draw [->,thick,dotted] (X7) -- (X13) node [midway,left] {$p_{k-1}^0$};
\draw [->,thick] (X17) -- (X23) node [midway,right] {};
\draw [->,thick] (X18) -- (X24) node [midway,right] {};
\draw [->,thick] (X21) -- (X27) node [midway,right] {};
\draw [->,thick] (X22) -- (X28) node [midway,left] {$f^n$};
\draw [->,thick] (X7) -- (X14) node [midway,right] {$f^{k-1}$};
\draw [->,thick] (X14) -- (X15) node [midway,right] {$f^k$};
\draw [->,thick] (X15) -- (X22) node [midway,right] {$f^{n-1}$};
\draw [->,thick] (X22) -- (X29) node [midway,right] {$f^n$};
\end{tikzpicture}
\end{center}
Since $f^kf^{k-1}=0$ and $g_k^1$ is a weak kernel of $f^k$, there exists a morphism $p_{k-1}^0:X^{k-1}\rightarrow Y_k^1$ such that $f^{k-1}=g_k^1p_{k-1}^0$. Note that the dotted morphisms in the diagram are obtained by taking $n$-pullback of the complex
\begin{equation}
0\rightarrow Y_k^n\overset{g_k^n}{\longrightarrow}Y_k^{n-1}\overset{g_k^{n-1}}{\longrightarrow}\cdots \overset{g_k^3}{\longrightarrow}Y_k^2\overset{g_k^2}{\longrightarrow}Y_k^1 \notag
\end{equation}
along $p_{k-1}^0$. It is easy to see that for every $1\leq k\leq n$ the induced morphism
\begin{equation}
[p_{k-1}^0, g_k^2]:X^{k-1}\oplus Y_k^2\longrightarrow Y_k^1 \notag
\end{equation}
in the above diagram is an epimorphism and hence the diagram is both an $n$-pullback diagram and an $n$-pushout diagram. Now all required properties are followed from basic properties of $n$-pullback and $n$-pushout.
Now assume that for every $k$
\begin{equation}
[p_k^0, g_{k+1}^2]:X^k\oplus Y_{k+1}^2\rightarrow Y_{k+1}^1 \notag
\end{equation}
is an epimorphism. We show that the complex $X$ is a right $n$-exact sequence. It is clear that $f^n$ is an epimorphism. Let $u:X^{k+1}\rightarrow M$ be a morphism such that $uf^k=0$. Since $[p_k^0, g_{k+1}^2]:X^k\oplus Y_{k+1}^2\rightarrow Y_{k+1}^1$ is an epimorphism, $ug_{k+1}^1=0$. By assumption
\begin{equation}
0\rightarrow (Y_{k+2}^1,M)\rightarrow (X^{k+1}\oplus Y_{k+2}^2,M)\rightarrow (Y_{k+1}^1\oplus Y_{k+2}^3,M) \notag
\end{equation}
is exact and so there is a morphism $v:Y_{k+2}^1\rightarrow M$ such that $u=vp_{k+1}^0$ and $vg_{k+2}^2=0$. Since $g_{k+2}^1$ is a weak cokernel of $g_{k+2}^2$ there is a morphism $w:X^{k+2}\rightarrow M$ such that $v=wg_{k+2}^1$. Now it is easy to see that $u=wf^{k+1}$ and hence $f^{k+1}$ is a weak cokernel of $f^k$. Then the complex $X$ is a right $n$-exact sequence.
\end{proof}
\end{proposition}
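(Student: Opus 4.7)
The plan is to build the staircase diagram by a two-level recursion using only axioms (A1), (A3) and the $n$-pullback existence (dual of Theorem \ref{theorem1}(i)), and then extract the characterization and (iv) from the explicit construction. For the existence claims (i)--(iii), for each $k$ I would first invoke (A1) to pick an $n$-kernel $(g_k^n,\dots,g_k^1)$ of $f^k$, which immediately gives (ii). To install the vertical maps, I would proceed by downward induction on $k$: starting with the chosen $n$-kernel at $k=n$, assume the diagram is built for rows $k,k+1,\dots,n$, and construct row $k-1$'s connections as follows. From $f^k f^{k-1}=0$ (the complex condition) and the fact that $g_k^1$ is a weak kernel of $f^k$, there is a map $p_{k-1}^0:X^{k-1}\to Y_k^1$ with $g_k^1 p_{k-1}^0 = f^{k-1}$. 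Taking the $n$-pullback of the complex $0\to Y_k^n\to\cdots\to Y_k^1$ along $p_{k-1}^0$ (dual of Theorem \ref{theorem1}(i)) produces objects $Y_{k-1}^l$ and vertical maps $p_{k-1}^{l-1}$ fitting into Diagram 2.1, giving both (i) and (iii).

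For the characterization, the ``if'' direction is the substantive content. Assume the morphisms $[p_k^0,g_{k+1}^2]$ are all epic. Given $u:X^{k+1}\to M$ with $uf^k=0$, I would use $f^k=g_{k+1}^1 p_k^0$ to get $ug_{k+1}^1 p_k^0 = 0$, combine with the automatic $ug_{k+1}^1 g_{k+1}^2 = 0$, and conclude $ug_{k+1}^1 = 0$ from the assumed epimorphism. Then the $n$-pullback property of the $(k+1,k+2)$ block, read via $\Hom(-,M)$, supplies $v:Y_{k+2}^1\to M$ with $u=vp_{k+1}^0$ and $vg_{k+2}^2=0$; factoring $v$ through the weak cokernel $g_{k+2}^1$ of $g_{k+2}^2$ produces $w:X^{k+2}\to M$ with $u=wf^{k+1}$, so $f^{k+1}$ is a weak cokernel of $f^k$. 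A final application of the same argument at the top end (plus the cokernel portion coming from the $n$-kernel structure at $k=n$) yields that $f^n$ is a cokernel of $f^{n-1}$, completing right $n$-exactness.

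For the ``only if'' direction together with the simultaneous $n$-pushout upgrade, I would start the recursion instead with the $n$-exact sequence produced by (A3) from the epimorphism $f^n$, and then inductively check at each step that the induced map $[p_{k-1}^0, g_k^2]:X^{k-1}\oplus Y_k^2\to Y_k^1$ is epic, because this is exactly the condition that converts an $n$-pullback diagram into a good (hence bona fide) $n$-pushout diagram; right $n$-exactness of $X$ at each internal position feeds into the epi check via the weak-cokernel characterization of the differentials. Statement (iv) then follows by splicing the $n$-kernel $(g_k^{k-1},\dots,g_k^1)$ of $f^k$ at its domain with the tail $(f^k,\dots,f^n)$ of $X$ and verifying the weak-cokernel / cokernel conditions at each interior object from the already-proved $n$-pullback/$n$-pushout data.

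The main obstacle I expect is the simultaneous $n$-pullback/$n$-pushout upgrade in the right $n$-exact case: one must show that the mapping cone of the vertical morphism in Diagram 2.1 is itself a right $n$-exact sequence, and this is precisely where the epimorphism condition on $[p_k^0,g_{k+1}^2]$ both enters and gets used, so the proof must be arranged to avoid circularity between that condition and the pushout conclusion. Handling this cleanly without appealing to Jasso's notion of good $n$-pushout (as the authors emphasize) is the delicate part, and I would manage it by phrasing everything in terms of the explicit epi condition and deriving the required exactness of the mapping cone directly from the conclusions of the inductive step.
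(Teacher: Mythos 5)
Your proposal follows essentially the same route as the paper's proof: anchor the staircase at $k=n$ using (A3) applied to the epimorphism $f^n$ (or an $n$-kernel via (A1) in the general case), factor $f^{k-1}$ through the weak kernel $g_k^1$ to get $p_{k-1}^0$, build the remaining rows by iterated $n$-pullback, run the same $u\mapsto v\mapsto w$ chase for the ``if'' direction, and verify the epimorphism condition $[p_{k-1}^0,g_k^2]$ inductively from the weak-cokernel properties for the ``only if'' direction. You also correctly isolate the same delicate point the paper treats tersely (upgrading the $n$-pullback to an $n$-pushout via the epimorphism condition without invoking good $n$-pushouts), so the two arguments coincide in substance.
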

\begin{remark}
We remind that in the proof of Proposition 3.13 of \cite{J} we can construct the following commutative diagram inductively.
\begin{diagram}\label{dia1}
\centering
\begin{tikzpicture}
\node (X1) at (-3,4.5) {$X^0$};
\node (X2) at (-1,4.5) {$X^1$};
\node (X3) at (1,4.5) {$\ldots$};
\node (X4) at (3,4.5) {$X^{n-1}$};
\node (X5) at (5,4.5) {$X^n$};
\node (X6) at (7,4.5) {$X^{n+1}$};
\node (X7) at (-3,3) {$Y_0^1$};
\node (X8) at (-1,3) {$Y_1^1$};
\node (X9) at (3,3) {$Y_{n-1}^1$};
\node (X10) at (5,3) {$Y_n^1$};
\node (X11) at (7,3) {$Y_{n+1}^1=X^{n+1}$};
\node (X12) at (-3,1.5) {$Y_0^2$};
\node (X13) at (-1,1.5) {$Y_1^2$};
\node (X14) at (3,1.5) {$Y_{n-1}^2$};
\node (X15) at (5,1.5) {$Y_n^2$};
\node (X16) at (7,1.5) {$Y_{n+1}^2=0$};
\node (X17) at (-3,0) {$\vdots$};
\node (X18) at (-1,0) {$\vdots$};
\node (X19) at (3,0) {$\vdots$};
\node (X20) at (5,0) {$\vdots$};
\node (X21) at (7,0) {$\vdots$};
\node (X22) at (-3,-1.5) {$Y_0^{n-1}$};
\node (X23) at (-1,-1.5) {$Y_1^{n-1}$};
\node (X24) at (3,-1.5) {$Y_{n-1}^{n-1}$};
\node (X25) at (5,-1.5) {$Y_n^{n-1}$};
\node (X26) at (7,-1.5) {$Y_{n+1}^{n-1}=0$};
\node (X27) at (-3,-3) {$Y_0^n$};
\node (X28) at (-1,-3) {$Y_1^n$};
\node (X29) at (3,-3) {$Y_{n-1}^n$};
\node (X30) at (5,-3) {$Y_n^n$};
\node (X31) at (7,-3) {$Y_{n+1}^n=0$};
\node (X32) at (-3,-4.5) {$0$};
\node (X33) at (-1,-4.5) {$0$};
\node (X34) at (3,-4.5) {$0$};
\node (X35) at (5,-4.5) {$0$};
\node (X36) at (7,-4.5) {$0$};
\draw [->,thick] (X1) -- (X2) node [midway,above] {};
\draw [->,thick] (X2) -- (X3) node [midway,above] {};
\draw [->,thick] (X3) -- (X4) node [midway,above] {};
\draw [->,thick] (X4) -- (X5) node [midway,right] {};
\draw [->,thick] (X5) -- (X6) node [midway,right] {};
\draw [->,thick] (X32) -- (X27) node [midway,right] {};
\draw [->,thick] (X27) -- (X22) node [midway,right] {};
\draw [->,thick] (X22) -- (X17) node [midway,right] {};
\draw [->,thick] (X17) -- (X12) node [midway,right] {};
\draw [->,thick] (X12) -- (X7) node [midway,above] {};
\draw [->,thick] (X7) -- (X1) node [midway,above] {};
\draw [->,thick] (X33) -- (X28) node [midway,above] {};
\draw [->,thick] (X28) -- (X23) node [midway,right] {};
\draw [->,thick] (X23) -- (X18) node [midway,right] {};
\draw [->,thick] (X18) -- (X13) node [midway,right] {};
\draw [->,thick] (X13) -- (X8) node [midway,right] {};
\draw [->,thick] (X8) -- (X2) node [midway,right] {};
\draw [->,thick] (X34) -- (X29) node [midway,right] {};
\draw [->,thick] (X29) -- (X24) node [midway,above] {};
\draw [->,thick] (X24) -- (X19) node [midway,above] {};
\draw [->,thick] (X19) -- (X14) node [midway,above] {};
\draw [->,thick] (X14) -- (X9) node [midway,right] {};
\draw [->,thick] (X9) -- (X4) node [midway,right] {};
\draw [->,thick] (X35) -- (X30) node [midway,right] {};
\draw [->,thick] (X30) -- (X25) node [midway,right] {};
\draw [->,thick] (X25) -- (X20) node [midway,right] {};
\draw [->,thick] (X20) -- (X15) node [midway,right] {};
\draw [->,thick] (X15) -- (X10) node [midway,above] {};
\draw [->,thick] (X10) -- (X5) node [midway,above] {};
\draw [->,thick] (X36) -- (X31) node [midway,right] {};
\draw [->,thick] (X31) -- (X26) node [midway,right] {};
\draw [->,thick] (X26) -- (X21) node [midway,right] {};
\draw [->,thick] (X21) -- (X16) node [midway,right] {};
\draw [->,thick] (X16) -- (X11) node [midway,above] {};
\draw [->,thick] (X11) -- (X6) node [midway,right] {$id$};
\draw [->,thick] (X1) -- (X8) node [midway,above] {};
\draw [->,thick] (X7) -- (X13) node [midway,right] {};
\draw [->,thick] (X22) -- (X28) node [midway,right] {};
\draw [->,thick] (X27) -- (X33) node [midway,right] {};
\draw [->,thick] (X4) -- (X10) node [midway,right] {};
\draw [->,thick] (X9) -- (X15) node [midway,right] {};
\draw [->,thick] (X24) -- (X30) node [midway,above] {};
\draw [->,thick] (X29) -- (X35) node [midway,right] {};
\draw [->,thick] (X5) -- (X11) node [midway,right] {};
\draw [->,thick] (X10) -- (X16) node [midway,right] {};
\draw [->,thick] (X25) -- (X31) node [midway,right] {};
\draw [->,thick] (X30) -- (X36) node [midway,right] {};
\end{tikzpicture}
\end{diagram}

First we construct the right-hand column and then by taking
$n$-pullback we construct another columns. We need to construct this diagram for a complex \begin{equation}
 X^0 \overset{f^0}{\rightarrow} X^1 \overset{f^1}{\rightarrow} \cdots \overset{f^{n-1}}{\rightarrow} X^n \overset{f^n}{\rightarrow} X^{n+1} \notag
\end{equation} in the rest of the paper.
\end{remark}
\begin{lemma}\label{lemma1}
Consider the following commutative diagram in an abelian category.
\begin{diagram}\label{dia2}
\centering
\begin{tikzpicture}
\node (X1) at (-3,4.5) {$A^0$};
\node (X2) at (-1.5,4.5) {$A^1$};
\node (X3) at (0,4.5) {$\ldots$};
\node (X4) at (1.5,4.5) {$A^{n-1}$};
\node (X5) at (3,4.5) {$A^n$};
\node (X6) at (4.5,4.5) {$A^{n+1}$};
\node (Z) at (6,4.5) {$0$};
\node (X7) at (-3,3) {$B_0^1$};
\node (X8) at (-1.5,3) {$B_1^1$};
\node (X9) at (1.5,3) {$B_{n-1}^1$};
\node (X10) at (3,3) {$B_n^1$};
\node (X11) at (4.5,3) {$B_{n+1}^1$};
\node (X12) at (-3,1.5) {$B_0^2$};
\node (X13) at (-1.5,1.5) {$B_1^2$};
\node (X14) at (1.5,1.5) {$B_{n-1}^2$};
\node (X15) at (3,1.5) {$B_n^2$};
\node (X16) at (4.5,1.5) {$B_{n+1}^2$};
\node (X17) at (-3,0) {$\vdots$};
\node (X18) at (-1.5,0) {$\vdots$};
\node (X19) at (1.5,0) {$\vdots$};
\node (X20) at (3,0) {$\vdots$};
\node (X21) at (4.5,0) {$\vdots$};
\node (X22) at (-3,-1.5) {$B_0^{n-1}$};
\node (X23) at (-1.5,-1.5) {$B_1^{n-1}$};
\node (X24) at (1.5,-1.5) {$B_{n-1}^{n-1}$};
\node (X25) at (3,-1.5) {$B_n^{n-1}$};
\node (X26) at (4.5,-1.5) {$B_{n+1}^{n-1}$};
\node (X27) at (-3,-3) {$B_0^n$};
\node (X28) at (-1.5,-3) {$B_1^n$};
\node (X29) at (1.5,-3) {$B_{n-1}^n$};
\node (X30) at (3,-3) {$B_n^n$};
\node (X31) at (4.5,-3) {$B_{n+1}^n$};
\node (X32) at (-3,-4.5) {$0$};
\node (X33) at (-1.5,-4.5) {$0$};
\node (X34) at (1.5,-4.5) {$0$};
\node (X35) at (3,-4.5) {$0$};
\node (X36) at (4.5,-4.5) {$0$};
\draw [->,thick] (X1) -- (X2) node [midway,above] {};
\draw [->,thick] (X2) -- (X3) node [midway,above] {};
\draw [->,thick] (X3) -- (X4) node [midway,above] {};
\draw [->,thick] (X4) -- (X5) node [midway,right] {};
\draw [->,thick] (X5) -- (X6) node [midway,right] {};
\draw [->,thick] (X6) -- (Z) node [midway,right] {};
\draw [->,thick] (X32) -- (X27) node [midway,right] {};
\draw [->,thick] (X27) -- (X22) node [midway,right] {};
\draw [->,thick] (X22) -- (X17) node [midway,right] {};
\draw [->,thick] (X17) -- (X12) node [midway,right] {};
\draw [->,thick] (X12) -- (X7) node [midway,above] {};
\draw [->,thick] (X7) -- (X1) node [midway,above] {};
\draw [->,thick] (X33) -- (X28) node [midway,above] {};
\draw [->,thick] (X28) -- (X23) node [midway,right] {};
\draw [->,thick] (X23) -- (X18) node [midway,right] {};
\draw [->,thick] (X18) -- (X13) node [midway,right] {};
\draw [->,thick] (X13) -- (X8) node [midway,right] {};
\draw [->,thick] (X8) -- (X2) node [midway,right] {};
\draw [->,thick] (X34) -- (X29) node [midway,right] {};
\draw [->,thick] (X29) -- (X24) node [midway,above] {};
\draw [->,thick] (X24) -- (X19) node [midway,above] {};
\draw [->,thick] (X19) -- (X14) node [midway,above] {};
\draw [->,thick] (X14) -- (X9) node [midway,right] {};
\draw [->,thick] (X9) -- (X4) node [midway,right] {};
\draw [->,thick] (X35) -- (X30) node [midway,right] {};
\draw [->,thick] (X30) -- (X25) node [midway,right] {};
\draw [->,thick] (X25) -- (X20) node [midway,right] {};
\draw [->,thick] (X20) -- (X15) node [midway,right] {};
\draw [->,thick] (X15) -- (X10) node [midway,above] {};
\draw [->,thick] (X10) -- (X5) node [midway,above] {};
\draw [->,thick] (X36) -- (X31) node [midway,right] {};
\draw [->,thick] (X31) -- (X26) node [midway,right] {};
\draw [->,thick] (X26) -- (X21) node [midway,right] {};
\draw [->,thick] (X21) -- (X16) node [midway,right] {};
\draw [->,thick] (X16) -- (X11) node [midway,above] {};
\draw [->,thick] (X11) -- (X6) node [midway,above] {};
\draw [->,thick] (X1) -- (X8) node [midway,above] {};
\draw [->,thick] (X7) -- (X13) node [midway,right] {};
\draw [->,thick] (X22) -- (X28) node [midway,right] {};
\draw [->,thick] (X27) -- (X33) node [midway,right] {};
\draw [->,thick] (X4) -- (X10) node [midway,right] {};
\draw [->,thick] (X9) -- (X15) node [midway,right] {};
\draw [->,thick] (X24) -- (X30) node [midway,above] {};
\draw [->,thick] (X29) -- (X35) node [midway,right] {};
\draw [->,thick] (X5) -- (X11) node [midway,right] {};
\draw [->,thick] (X10) -- (X16) node [midway,right] {};
\draw [->,thick] (X25) -- (X31) node [midway,right] {};
\draw [->,thick] (X30) -- (X36) node [midway,right] {};
\end{tikzpicture}
\end{diagram}

Assume that the right-hand column is exact and for every $k$ the mapping cone
\begin{equation}
0\rightarrow B_k^n\rightarrow B_k^{n-1}\oplus 0\rightarrow \cdots \rightarrow A^k\oplus B_{k+1}^2\rightarrow B_{k+1}^1 \notag
\end{equation}
of the following morphism of complexes in the diagram is exact.
\begin{center}
\begin{tikzpicture}
\node (X1) at (-4,1) {$B_k$};
\node (X2) at (-2,1) {$B_k^n$};
\node (X3) at (0,1) {$B_k^{n-1}$};
\node (X4) at (2,1) {$\ldots$};
\node (X5) at (4,1) {$B_k^1$};
\node (X6) at (6,1) {$A^k$};
\node (X7) at (-6,-1) {$B_{k+1}$};
\node (X8) at (-4,-1) {$0$};
\node (X9) at (-2,-1) {$B_{k+1}^n$};
\node (X10) at (0,-1) {$\ldots$};
\node (X11) at (2,-1) {$B_{k+1}^2$};
\node (X12) at (4,-1) {$B_{k+1}^1$};
\draw [->,thick] (X1) -- (X7) node [midway,left] {};
\draw [->,thick] (X2) -- (X8) node [midway,left] {};
\draw [->,thick] (X3) -- (X9) node [midway,left] {};
\draw [->,thick] (X5) -- (X11) node [midway,left] {};
\draw [->,thick] (X6) -- (X12) node [midway,left] {};
\draw [->,thick] (X2) -- (X3) node [midway,above] {};
\draw [->,thick] (X3) -- (X4) node [midway,above] {};
\draw [->,thick] (X4) -- (X5) node [midway,above] {};
\draw [->,thick] (X5) -- (X6) node [midway,above] {};
\draw [->,thick] (X8) -- (X9) node [midway,above] {};
\draw [->,thick] (X9) -- (X10) node [midway,above] {};
\draw [->,thick] (X10) -- (X11) node [midway,above] {};
\draw [->,thick] (X11) -- (X12) node [midway,above] {};
\end{tikzpicture}
\end{center}
Then the top row is exact if and only if for every $k$ the morphism $A^k\oplus B_{k+1}^2\rightarrow B_{k+1}^1$ is an epimorphism.
\begin{proof}
Since the right-hand column and all mapping cones are exact, all other columns are also exact. Assume that for every $k$ the morphism $A^k\oplus B_{k+1}^2\rightarrow B_{k+1}^1$ is an epimorphism. Consider the following commutative diagram:
\begin{center}
\begin{tikzpicture}
\node (X1) at (-3,2) {$A^{k-1}$};
\node (X2) at (0,2) {$A^k$};
\node (X3) at (3,2) {$A^{k+1}$};
\node (X4) at (-3,0) {$B_{k-1}^1$};
\node (X5) at (0,0) {$B_k^1$};
\node (X6) at (3,0) {$B_{k+1}^1$};
\node (X7) at (-3,-2) {$B_{k-1}^2$};
\node (X8) at (0,-2) {$B_k^2$};
\node (X9) at (3,-2) {$B_{k+1}^2$};
\node (X10) at (3,-4) {$B_{k+1}^3$};
\draw [->,thick] (X1) -- (X2) node [midway,above] {$f^{k-1}$};
\draw [->,thick] (X2) -- (X3) node [midway,above] {$f^k$};
\draw [->,thick] (X7) -- (X4) node [midway,right] {$g_{k-1}^2$};
\draw [->,thick] (X4) -- (X1) node [midway,right] {$g_{k-1}^1$};
\draw [->,thick] (X8) -- (X5) node [midway,right] {$g_k^2$};
\draw [->,thick] (X5) -- (X2) node [midway,right] {$g_k^1$};
\draw [->,thick] (X10) -- (X9) node [midway,right] {$g_{k+1}^3$};
\draw [->,thick] (X9) -- (X6) node [midway,right] {$g_{k+1}^2$};
\draw [->,thick] (X6) -- (X3) node [midway,right] {$g_{k+1}^1$};
\draw [->,thick] (X1) -- (X5) node [midway,above] {$p^{k-1}$};
\draw [->,thick] (X4) -- (X8) node [midway,above] {$p_{k-1}^1$};
\draw [->,thick] (X2) -- (X6) node [midway,above] {$p^{k}$};
\draw [->,thick] (X5) -- (X9) node [midway,above] {$p_k^1$};
\draw [->,thick] (X8) -- (X10) node [midway,above] {$p_{k-1}^2$};
\end{tikzpicture}
\end{center}
We want to show that $\Ker(f^k)\subseteq \Imm(f^{k-1})$. Let $a^k\in \Ker(f^k)$, then $g_{k+1}^1p^k(a^k)=0$. Since all columns are exact, $p^k(a^k)=g_{k+1}^2(b_{k+1}^2)$ for some $b_{k+1}^2\in B_{k+1}^2$ and so
\begin{equation}
[p^k,g_{k+1}^2](a^k,-b_{k+1}^2)=0, \notag
\end{equation}
where $[p^k,g_{k+1}^2]:A^k\oplus B_{k+1}^2\rightarrow B_{k+1}^1$ is the induced morphism. Since the mapping cones are exact,  there exist $b_k^1\in B_k^1$ and $b_{k+1}^3\in B_{k+1}^3$ such that
\begin{equation}
(a^k,-b_{k+1}^2)=\begin{pmatrix} -g_k^1 & 0\\ p_k^1 & g_{k+1}^3 \end{pmatrix}
(b_k^1,b_{k+1}^3). \notag
\end{equation}
This mean that $a^k=g_k^1(b_k^1)$. Thus $b_k^1=p^{k-1}(a^{k-1})+g_k^2(b_k^2)$ for some $a^{k-1}\in A^{k-1}$ and $b_k^2\in B_k^2$. Therefore
\begin{equation}
a^k=g_k^1(b_k^1)=g_k^1p^{k-1}(a^{k-1})=f^{k-1}(a^{k-1}) \notag
\end{equation}
and so $a^k\in \Imm(f^{k-1})$. An easy calculation shows that $\Imm(f^{k-1})\subseteq \Ker(f^k)$ and the result follows. Now assume that the top row is exact and let $b_k^1\in B_k^1$. Since the Diagram 2.3 is commutative, $f^kg_k^1(b_k^1)=0$. By assumption there exists $a^{k-1}\in A^{k-1}$ such that $f^{k-1}(a^{k-1})=g_k^1(b_k^1)$. Then $g_k^1(p^{k-1}(a^{k-1})-b_k^1)=0$ and hence by exactness of columns $p^{k-1}(a^{k-1})-b_k^1=g_k^2(b_k^2)$ for some $b_k^2\in B_k^2$. This means that $b_k^1=[p^{k-1}, g_k^2](a^{k-1},-b_k^2)$. Therefore $[p^{k-1}, g_k^2]$ is an epimorphism.

\end{proof}
\end{lemma}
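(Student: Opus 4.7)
My plan is to treat the two implications of the biconditional separately, preceded by an observation that establishes exactness of all the columns of Diagram~\ref{dia2}. Since the rightmost column is exact by hypothesis and each horizontal mapping cone is exact, the standard principle that an acyclic cone of a chain map identifies the homological behaviour of its source and target, applied inductively from right to left, forces every column to be exact. This is the workhorse that makes the subsequent diagram chases go through.

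For the ``if'' direction, assume $[p^k, g_{k+1}^2]\colon A^k \oplus B_{k+1}^2 \to B_{k+1}^1$ is an epimorphism for every $k$, and fix $a^k \in \Ker(f^k)$. Commutativity gives $g_{k+1}^1 p^k(a^k) = 0$, so column exactness produces $b_{k+1}^2 \in B_{k+1}^2$ with $p^k(a^k) = g_{k+1}^2(b_{k+1}^2)$. The element $(a^k, -b_{k+1}^2)$ is then killed by $[p^k, g_{k+1}^2]$, so by cone exactness it lies in the image of the preceding cone differential, yielding $b_k^1 \in B_k^1$ (together with a companion element in $B_{k+1}^3$) such that $a^k = g_k^1(b_k^1)$ up to sign. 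Applying the same cone-exactness one column to the left lets me write $b_k^1 = p^{k-1}(a^{k-1}) + g_k^2(b_k^2)$, and then $a^k = g_k^1 p^{k-1}(a^{k-1}) = f^{k-1}(a^{k-1})$. The reverse inclusion $\Imm(f^{k-1}) \subseteq \Ker(f^k)$ reduces to $f^k g_k^1 = 0$, which is immediate from commutativity of the relevant squares.

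For the ``only if'' direction, assume the top row is exact and take $b_k^1 \in B_k^1$. Commutativity gives $f^k g_k^1(b_k^1) = 0$, so by top-row exactness $g_k^1(b_k^1) = f^{k-1}(a^{k-1}) = g_k^1 p^{k-1}(a^{k-1})$ for some $a^{k-1}$. Hence $p^{k-1}(a^{k-1}) - b_k^1 \in \Ker(g_k^1) = \Imm(g_k^2)$ by column exactness, and writing this difference as $g_k^2(b_k^2)$ exhibits $b_k^1$ in the image of $[p^{k-1}, g_k^2]$.

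The main obstacle is organizational rather than conceptual: the two-dimensional diagram has many objects and arrows, and one must carefully distinguish the horizontal differentials $f^k$, the vertical differentials $g_k^j$, and the connecting maps $p_k^j$, while tracking signs in the cone differentials. Once the elementwise kernel/image description provided by cone exactness is in hand, each direction reduces to a short chase combining commutativity with exactness of a single column, with no deeper abelian-category machinery required.
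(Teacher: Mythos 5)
Your proof is correct and follows essentially the same route as the paper's: establish exactness of all columns from the exact right-hand column and the exact cones, then run the two element chases exactly as in the paper. The one slip is in the justification of the step $b_k^1 = p^{k-1}(a^{k-1}) + g_k^2(b_k^2)$: this is \emph{not} a consequence of ``cone-exactness one column to the left,'' because the displayed mapping cones are not asserted to be exact at their final term $B_k^1$ (that surjectivity is precisely the extra condition the lemma characterizes); rather, it follows from the hypothesis of this direction, namely that $[p^{k-1}, g_k^2]\colon A^{k-1}\oplus B_k^2 \rightarrow B_k^1$ is an epimorphism for the index $k-1$, so the step stands but should be attributed to that assumption.
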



\section{the category of group valued functors}

In this section we first provide some preliminaries on the functor category $(\mathcal{M},\mathcal{G})$ and then we construct the subcategory $\mathcal{L}_2(\mathcal{M},\mathcal{G})$ of absolutely pure group valued functors of $(\mathcal{M},\mathcal{G})$ which is an abelian category with injective cogenerator.

\subsection{$n$-exact functors} In this subsection we recall the definitions of left $n$-exact, right $n$-exact and $n$-exact functors from an $n$-abelian category to an abelian category and show that a functor is $n$-exact if and only if it is both left and right $n$-exact.

\begin{definition}\label{def1}\cite[Section 4.1]{Lu}
Let $\mathcal{M}$ be an $n$-abelian category, $\mathcal{A}$ an abelian category and $F:\mathcal{M}\rightarrow \mathcal{A}$ a covariant functor.
\begin{itemize}
\item[(i)] $F$ is called left $n$-exact if for any left $n$-exact sequence $X^0 \overset{f^0}{\rightarrow} X^1 \overset{f^1}{\rightarrow} \cdots \overset{f^{n-1}}{\rightarrow} X^n \overset{f^n}{\rightarrow} X^{n+1} \notag$ in $\mathcal{M}$, $0 \rightarrow F(X^0) \rightarrow F(X^1) \rightarrow \cdots \rightarrow F(X^n)
    \rightarrow F(X^{n+1}) \notag$ is an exact sequence of $\mathcal{A}$.
\item[(ii)] $F$ is called right $n$-exact if for any right $n$-exact sequence $X^0 \overset{f^0}{\rightarrow} X^1 \overset{f^1}{\rightarrow} \cdots \overset{f^{n-1}}{\rightarrow} X^n \overset{f^n}{\rightarrow} X^{n+1} \notag$ in $\mathcal{M}$, $F(X^0) \rightarrow F(X^1) \rightarrow \cdots \rightarrow F(X^n)
    \rightarrow F(X^{n+1})\rightarrow 0 \notag$ is an exact sequence of $\mathcal{A}$.
\item[(iii)] $F$ is called $n$-exact if for any $n$-exact sequence $X^0 \overset{f^0}{\rightarrow} X^1 \overset{f^1}{\rightarrow} \cdots \overset{f^{n-1}}{\rightarrow} X^n \overset{f^n}{\rightarrow} X^{n+1} \notag$ in $\mathcal{M}$, $0 \rightarrow F(X^0) \rightarrow F(X^1) \rightarrow \cdots \rightarrow F(X^n)
    \rightarrow F(X^{n+1})\rightarrow 0 \notag$ is an exact sequence of $\mathcal{A}$.
\end{itemize}
\end{definition}

The contravariant left $n$-exact (resp., right $n$-exact, $n$-exact) functors are defined similarly.

\begin{proposition}
Let $\mathcal{M}$ be an $n$-abelian category and $\mathcal{A}$ an abelian category. A covariant functor $F:\mathcal{M}\rightarrow \mathcal{A}$ is an $n$-exact functor if and only if it is both left and right $n$-exact functor.
\begin{proof}
If $F$ is left $n$-exact and right $n$-exact, then it is obvious that $F$ is $n$-exact. Now assume that $F$ is $n$-exact. We show that it is right $n$-exact.
Let
\begin{equation}
X:X^0 \overset{d^0}{\rightarrow} X^1 \overset{d^1}{\rightarrow} \cdots \overset{d^{n-1}}{\rightarrow} X^n \overset{d^n}{\rightarrow} X^{n+1} \notag
\end{equation}
be a right $n$-exact sequence in $\mathcal{M}$. First we construct the Diagram 2.2 for $X$. In the Diagram 2.2 of $X$ the complex
\begin{equation}
 Y_{n+1}^n \overset{g_{n+1}^n}{\rightarrow} Y_{n+1}^{n-1} \overset{g_{n+1}^{n-1}}{\rightarrow} \cdots \overset{g_{n+1}^2}{\rightarrow} Y_{n+1}^1 \overset{g_{n+1}^1}{\rightarrow} X^{n+1} \notag
\end{equation}
is an $n$-exact sequence. Since $X$ is right $n$-exact, by Proposition \ref{propositionmain}(iii), we can choose $Y_j^i$'s in a way that Diagram 2.1 is both an $n$-pullback and an $n$-pushout. The mapping cones of $n$-pullback $n$-pushout diagrams are also $n$-exact sequences. Thus applying $F$ they are sent to exact sequences in $\mathcal{A}$.
After applying $F$ we have a commutative Diagram 2.3, where $F(X^i)=A^i$ and $F(Y^i_j)=B^i_j$. Then by Lemma \ref{lemma1}, $F(X^0) \rightarrow F(X^1) \rightarrow \cdots \rightarrow F(X^n)
    \rightarrow F(X^{n+1})\rightarrow 0 \notag$ is an exact sequence. Similarly we can see that $F$ is a left $n$-exact functor.
\end{proof}
\end{proposition}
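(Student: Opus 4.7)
The \emph{if} direction is immediate: every $n$-exact sequence is both left and right $n$-exact, so combining the two hypotheses on $F$ yields the $0$-augmented exact image required by Definition \ref{def1}(iii).

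For the \emph{only if} direction, assume $F$ is $n$-exact and let
\[
X: X^0 \overset{f^0}{\to} X^1 \overset{f^1}{\to} \cdots \overset{f^n}{\to} X^{n+1}
\]
be a right $n$-exact sequence in $\mathcal{M}$; the goal is to show $F(X^0)\to\cdots\to F(X^{n+1})\to 0$ is exact in $\mathcal{A}$. A preliminary observation I would record first is that $F$ preserves epimorphisms: if $e:A\to B$ is epi in $\mathcal{M}$, axioms $(A1)$ and $(A3)$ fit $e$ into an $n$-exact sequence with $e$ as its last morphism, so the $n$-exactness of $F$ forces $F(e)$ to be epi in $\mathcal{A}$. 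Dually, $F$ preserves monomorphisms.

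Next, I would feed $X$ into Proposition \ref{propositionmain} to produce the grid of objects $Y_k^l$ and morphisms assembling into Diagram 2.2 of the Remark. Since $X$ is right $n$-exact, part (iii) of the proposition lets us choose the $Y_k^l$ so that each subdiagram between two adjacent columns is simultaneously an $n$-pullback and an $n$-pushout. This is the crucial input: the mapping cone of such a diagram is then both left and right $n$-exact, hence fully $n$-exact in $\mathcal{M}$. The right-most column is trivial (identity on $X^{n+1}$, zeros elsewhere), and the next column to the left is $n$-exact, arising as an $n$-kernel of the epimorphism $f^n$ via axiom $(A3)$.

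I would then apply $F$ throughout the grid. The mapping cones, being $n$-exact in $\mathcal{M}$, become exact sequences in $\mathcal{A}$; the right-most column stays trivially exact; and each epimorphism $[p_k^0,g_{k+1}^2]:X^k\oplus Y_{k+1}^2\to Y_{k+1}^1$ produced by Proposition \ref{propositionmain} remains an epimorphism in $\mathcal{A}$ because $F$ preserves epis. These are precisely the hypotheses of Lemma \ref{lemma1} applied in $\mathcal{A}$, so its implication from the epimorphism conditions to exactness of the top row yields that $F(X^0)\to F(X^1)\to\cdots\to F(X^{n+1})$ is exact. Combined with the fact that $F(f^n)$ is an epimorphism, this shows $F$ is right $n$-exact. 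Left $n$-exactness is obtained by the formally dual argument, using the dual of Proposition \ref{propositionmain} together with the dual of Lemma \ref{lemma1}.

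The only real obstacle is the bookkeeping that the $n$-pullback-and-$n$-pushout choice in Proposition \ref{propositionmain}(iii) genuinely makes each relevant mapping cone fully $n$-exact (not merely right $n$-exact), since that is exactly what allows the $n$-exactness of $F$ to transfer the cone's exactness into $\mathcal{A}$; once that identification is made, Lemma \ref{lemma1} delivers the rest mechanically.
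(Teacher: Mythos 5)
Your proof is correct and follows essentially the same route as the paper: you invoke Proposition \ref{propositionmain}(iii) to build the grid of Diagram 2.2 with columns chosen so that each adjacent pair forms an $n$-pullback--$n$-pushout diagram whose mapping cone is $n$-exact, apply $F$, and conclude with Lemma \ref{lemma1}. The only cosmetic difference is that you obtain the epimorphism hypothesis of Lemma \ref{lemma1} from a separately recorded fact that an $n$-exact functor preserves epimorphisms, whereas the paper reads it off directly from the exactness (terminating in $0$) of the images of the mapping cones; both are valid.
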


\subsection{Reminder on basic properties}
Let $\mathcal{M}$ be an $n$-abelian category, we denote by $(\mathcal{M},\mathcal{G})$ the category of all additive functors from $\mathcal{M}$ to the category of all abelian groups. In this subsection we recall some basic properties of the category $(\mathcal{M},\mathcal{G})$. The reader can find proofs in \cite{Fr}.

A category is called complete if all small limits exist in it.
Dually, a category is called cocomplete if all small colimits
exist in it.  A cocomplete abelian category with a generator in
which direct limit of exact sequences is exact, is called a
Grothendieck category. The category $(\mathcal{M},\mathcal{G})$ is
a complete abelian category such that all limits compute pointwise.
Thus it is not hard to see that it is a Grothendieck category. If
for every $X \in \mathcal{M}$ we denote the functor $\mathcal{M}(X,-) \in
(\mathcal{M},\mathcal{G})$ by $H^X$, then the Yoneda lemma states
that $H^X$ is a projective object, and $\Sigma_{X\in \mathcal{M}}
H^X$ is a generator for $(\mathcal{M},\mathcal{G})$, where $\Sigma_{X\in \mathcal{M}}H^X$ is the direct sum of all representable functors $H^X$ in $(\mathcal{M},\mathcal{G})$.

We need the following results in the rest of the paper.

\begin{theorem}$($\cite[Theorem 6.25]{Fr}$)$
In a Grothendieck category with a generator, every object has an injective envelope.
\end{theorem}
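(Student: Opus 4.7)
The plan is to prove that every object $A$ of a Grothendieck category $\mathcal{A}$ with generator $U$ has an injective envelope via the classical two-step argument: first establish that $\mathcal{A}$ has enough injectives, then produce a maximal essential extension of $A$ inside some chosen injective and show it is itself injective.

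For the enough-injectives step, I would construct for a given object $A$ a monomorphism into an injective by transfinite iteration. Set $A_0=A$, and at successor stage $\alpha+1$ form $A_{\alpha+1}$ as the pushout of $A_\alpha$ along the disjoint union of all data $(B\hookrightarrow U,\ B\to A_\alpha)$ where $B$ ranges over subobjects of the generator. At limit ordinals take the filtered colimit, which preserves monomorphisms by the Grothendieck condition. Because the subobjects of $U$ form a set, the process stabilizes at an ordinal strictly larger than that cardinal to an object $I$ with the Baer-type extension property: any morphism $B\to I$ from a subobject of $U$ extends to $U\to I$. Since $U$ is a generator, this extension property implies injectivity of $I$ (any morphism from a subobject of an arbitrary object can be extended by Zorn on partial extensions, reducing to the Baer-generator test).

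For the envelope step, embed $A$ into an injective $I$ and consider the poset $\mathcal{E}$ of subobjects $M$ with $A\subseteq M\subseteq I$ such that $A\hookrightarrow M$ is essential (every nonzero subobject of $M$ meets $A$). The union of a chain of essential subextensions inside the fixed ambient $I$ is again essential, so Zorn's lemma produces a maximal element $A\hookrightarrow E$ in $\mathcal{E}$. To see that $E$ is injective, let $E\hookrightarrow F$ be any monomorphism; by injectivity of $I$ there is an extension $\varphi\colon F\to I$ of $E\hookrightarrow I$. The kernel of $\varphi$ meets $E$ trivially, so if $E\hookrightarrow F$ were essential the kernel would be zero, making $\varphi$ a monomorphism and identifying $F$ with an element of $\mathcal{E}$ strictly larger than $E$ unless $F=E$. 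Standard formal manipulation (splitting off the essential part of any extension) then upgrades this lack of proper essential extensions to genuine injectivity, and the monomorphism $A\hookrightarrow E$ is essential by construction, so it is the sought injective envelope.

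The main obstacle is organizing the Zorn's lemma argument so that the poset of essential extensions really is a set and not merely a class. Working inside the chosen ambient injective $I$ sidesteps this, but one still needs the Grothendieck axiom to ensure colimits along chains of monomorphisms remain monomorphisms and preserve essentialness. The existence of the generator enters crucially both in the Baer-style construction of enough injectives and in guaranteeing that one can bound attention to subobjects of a single ambient injective; everything else is formal diagram chasing with pushouts, colimits, and the injectivity lifting property.
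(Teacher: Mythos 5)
The paper offers no proof of this statement: it is quoted verbatim from \cite[Theorem 6.25]{Fr} and used as a black box, so there is no ``paper's proof'' to compare against. Your argument is the standard Grothendieck--Freyd--Mitchell proof and is correct in outline: enough injectives via the generator and a transfinite pushout construction, then a maximal essential extension inside a fixed injective via Zorn, then the upgrade to injectivity via the complement trick (choose $C\subseteq I$ maximal with $C\cap E=0$, deduce $I=E\oplus C$). Two points deserve more care than your wording gives them. First, the transfinite construction does not literally ``stabilize''; rather, one stops at a regular cardinal $\kappa$ exceeding the cardinality of the subobject lattice of $U$ and argues that any $f\colon B\to A_\kappa$ with $B\subseteq U$ factors through some stage $A_\alpha$, $\alpha<\kappa$, because the increasing chain of preimages $f^{-1}(A_\alpha)\subseteq B$ has fewer than $\kappa$ distinct terms and AB5 gives $B=\bigcup_\alpha f^{-1}(A_\alpha)$; this is exactly where both the generator and the exactness of filtered colimits are indispensable, and ``stabilizes'' glosses over it. Second, the Baer-type criterion (extension property against subobjects of $U$ implies injectivity) is itself a nontrivial lemma needing Zorn plus the generator property; you name it but do not prove it. Neither issue is a gap in the sense of a wrong idea --- both are filled by the standard arguments in \cite{Fr} --- but a complete write-up must include them. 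Your final step (no proper essential extension plus embeddability in an injective implies injective) is correctly identified and correctly reduced to the direct-summand argument.
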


\begin{proposition}\label{pro}$($\cite[Proposition 3.37]{Fr}$)$
Let $\mathcal{A}$ be a complete abelian category with a generator. Every object in $\mathcal{A}$ may be embedded in an injective object if and only if $\mathcal{A}$ has an injective cogenerator.
\end{proposition}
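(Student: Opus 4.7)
The plan is to prove the two implications separately, with the main work going into the construction of an injective cogenerator from a generator together with a sufficient supply of injectives.

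For the direction ``injective cogenerator $\Rightarrow$ enough injectives'', suppose $C$ is an injective cogenerator of $\mathcal{A}$. By the cogenerator property together with completeness, for every $A \in \mathcal{A}$ the canonical morphism $A \to \prod_{f \in \Hom_{\mathcal{A}}(A,C)} C$ is a monomorphism; since a product of injectives is injective (an immediate consequence of the universal properties involved), this realises $A$ as a subobject of an injective object.

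For the converse, fix a generator $U$ of $\mathcal{A}$, assume every object embeds in an injective, and proceed by assembling the quotients of $U$: for each subobject $V \subseteq U$ choose a monomorphism $\iota_V \colon U/V \hookrightarrow E_V$ with $E_V$ injective, and set
\begin{equation}
E = \prod_{V \subseteq U} E_V. \notag
\end{equation}
Then $E$ is injective. To see that it is a cogenerator, it suffices to exhibit, for every nonzero $A \in \mathcal{A}$, a nonzero morphism $A \to E$. Since $U$ generates, pick $0 \neq u \colon U \to A$, let $V = \Ker u$, and factor $u$ as $U \twoheadrightarrow U/V \xrightarrow{\bar u} A$ with $\bar u$ a monomorphism. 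By injectivity of $E_V$, the mono $\iota_V$ extends along $\bar u$ to a morphism $\tilde\iota \colon A \to E_V$ with $\tilde\iota \bar u = \iota_V \neq 0$, so $\tilde\iota \neq 0$; composing with the canonical split monomorphism $j_V \colon E_V \to E$ (defined via the product's universal property by $\mathrm{id}$ in the $V$-th slot and $0$ elsewhere) yields a morphism $A \to E$ whose composite with the $V$-th projection is $\tilde\iota \neq 0$, hence is itself nonzero.

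The only genuinely delicate point is set-theoretic: the product defining $E$ must be indexed by a set, i.e.\ one needs the subobjects of the generator $U$ to form a set rather than a proper class. This well-poweredness is where completeness together with the existence of a generator really enters, beyond the formal manipulation of products and extensions; it is a standard consequence of these hypotheses, so the construction is legitimate. This well-poweredness step is the main potential obstacle, but once it is granted the remainder is a routine diagram chase.
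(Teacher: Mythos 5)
Your proof is correct, and the paper itself gives no argument for this proposition (it is quoted directly from Freyd's \emph{Abelian Categories}, Proposition 3.37); your construction --- embedding each quotient $U/V$ of the generator into an injective and taking the product over the set of subobjects of $U$ --- is precisely the standard argument from that source, with the well-poweredness needed to index the product following from the existence of the generator. The only point worth making explicit is that ``every nonzero object admits a nonzero morphism to $E$'' suffices for the full cogenerator property only because $E$ is injective (extend a nonzero map on $\Imm f$ along $\Imm f\hookrightarrow B$), but that reduction is routine.
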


Now we give a higher-dimensional version of \cite[Proposition 7.11]{Fr}.

\begin{proposition}\label{propositionf0}
If an object $E\in (\mathcal{M},\mathcal{G})$ is injective, then it is a right $n$-exact functor.
\end{proposition}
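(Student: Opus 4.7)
The plan is to reduce right $n$-exactness of $E$ to the exactness of $\Hom_{(\mathcal{M},\mathcal{G})}(-,E)$ applied to a naturally occurring exact sequence of representables. The bridge is that a right $n$-exact sequence in $\mathcal{M}$ can be reinterpreted, via the very definition of $n$-cokernel, as an exact complex of contravariant representables in $(\mathcal{M},\mathcal{G})$.

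Concretely, suppose $X^0 \overset{f^0}{\rightarrow} X^1 \overset{f^1}{\rightarrow} \cdots \overset{f^n}{\rightarrow} X^{n+1}$ is a right $n$-exact sequence in $\mathcal{M}$. By the definition of $n$-cokernel, for every $Y \in \mathcal{M}$ the sequence
\begin{equation*}
0 \rightarrow \mathcal{M}(X^{n+1},Y) \rightarrow \mathcal{M}(X^n,Y) \rightarrow \cdots \rightarrow \mathcal{M}(X^0,Y)
\end{equation*}
is exact. Since kernels and images in $(\mathcal{M},\mathcal{G})$ are computed pointwise, this is precisely the statement that the complex of representables
\begin{equation*}
0 \rightarrow H^{X^{n+1}} \rightarrow H^{X^n} \rightarrow \cdots \rightarrow H^{X^0}
\end{equation*}
is exact in the abelian category $(\mathcal{M},\mathcal{G})$.

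Next I would exploit that injectivity of $E$ is equivalent to exactness of the contravariant functor $\Hom_{(\mathcal{M},\mathcal{G})}(-,E)$. Decomposing the above long exact sequence into short exact sequences along the intermediate images $\Imm(H^{X^{i+1}} \rightarrow H^{X^i})$, applying $\Hom(-,E)$ to each, and splicing the results back together yields exactness of
\begin{equation*}
\Hom(H^{X^0},E) \rightarrow \Hom(H^{X^1},E) \rightarrow \cdots \rightarrow \Hom(H^{X^n},E) \rightarrow \Hom(H^{X^{n+1}},E) \rightarrow 0
\end{equation*}
at every position from $\Hom(H^{X^1},E)$ through the rightmost $0$. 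No exactness at $\Hom(H^{X^0},E)$ is required (nor expected, since the original representable complex was not assumed exact at $H^{X^0}$). Finally, the Yoneda lemma supplies natural isomorphisms $\Hom_{(\mathcal{M},\mathcal{G})}(H^{X^i},E) \cong E(X^i)$ under which the induced connecting maps become $E(f^i)$, so the display above translates to exactness of $E(X^0) \rightarrow E(X^1) \rightarrow \cdots \rightarrow E(X^{n+1}) \rightarrow 0$, which is the desired right $n$-exactness.

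I foresee no substantial obstacle. The key translation from $n$-cokernels to exact sequences of representables is essentially tautological, and the splicing argument for $\Hom(-,E)$ is the standard one, identical in spirit to Freyd's proof of \cite[Proposition 7.11]{Fr} in the $n=1$ case; the only change is the greater length of the resolution, which is handled by iterating the short exact sequence decomposition.
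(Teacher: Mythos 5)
Your proposal is correct and follows essentially the same route as the paper: translate the right $n$-exact sequence into the exact sequence $0\rightarrow H^{X^{n+1}}\rightarrow\cdots\rightarrow H^{X^0}$ of representables in $(\mathcal{M},\mathcal{G})$, apply the exact functor $\Hom(-,E)$, and conclude via the Yoneda lemma. The paper simply invokes exactness of $\Hom(-,E)$ where you spell out the splicing through short exact sequences, but the argument is the same.
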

\begin{proof}
Let $X^0 \rightarrow X^1 \rightarrow \cdots \rightarrow X^n \rightarrow X^{n+1}$ be a right $n$-exact sequence. Then we have the following exact sequence in $(\mathcal{M},\mathcal{G})$
\begin{equation}
0\rightarrow H^{X^{n+1}} \rightarrow H^{X^n}\rightarrow \cdots \rightarrow H^{X^1} \rightarrow H^{X^0}.  \notag
\end{equation}
Since $\Hom(-,E):(\mathcal{M},\mathcal{G})\rightarrow \mathcal{G}$ is an exact functor, applying this functor to the above sequence and using the Yoneda lemma the result follows.

\end{proof}

\subsection{The subcategory of mono functors}
A right $n$-exact functor is $n$-exact if and only if it carries monomorphisms to monomorphisms. A functor $F\in (\mathcal{M},\mathcal{G})$ is called a mono functor if it preserves monomorphisms. Thus by proposition \ref{propositionf0} an injective mono functor is an $n$-exact functor. We denote by $\mathbb{M}(\mathcal{M},\mathcal{G})$, the full subcategory of $(\mathcal{M},\mathcal{G})$ consist of all mono functors.

A monomorphism $A\rightarrow B$ is called an essential extension of $A$, if for every nonzero monomorphism $B'\rightarrow B$, the intersection of the images of $A\rightarrow B$ and $B'\rightarrow B$ are nonzero.

The following lemma is a higher-dimensional version of \cite[Lemma 7.12]{Fr}.

\begin{lemma}
Let $M\in \mathbb{M}(\mathcal{M},\mathcal{G})$, and $M\hookrightarrow E$ be an essential extension of $M$ in $(\mathcal{M},\mathcal{G})$. Then $E\in \mathbb{M}(\mathcal{M},\mathcal{G})$.
\begin{proof}
Assume that $E$ is not mono. Then there is a monomorphism $f:X^0\rightarrow X^1$ and $0\neq x\in E(X^0)$ such that $E(f)(x)=0$. We produce a subfunctor $F$ of $E$ generated by $x$. For every $Y\in \mathcal{M}$, define
\begin{equation}
F(Y):=\{y\in E(Y) \vert \;\text{there is a} \;h:X^0\rightarrow Y \;\text{such that} \;E(h)(x)=y \}. \notag
\end{equation}
Since $M\subseteq E$ is essential, there is an object $Y^0$ such that $F(Y^0)\cap M(Y^0)\neq0$. Let $0\neq y\in F(Y^0)\cap M(Y^0)$. By the construction of $F$ there is a morphism $h:X^0\rightarrow Y^0$ such that  $E(h)(x)=y$. Let
\begin{center}
\begin{tikzpicture}
\node (X2) at (-2,1) {$X^0$};
\node (X3) at (0,1) {$X^1$};
\node (X4) at (2,1) {$\ldots$};
\node (X5) at (4,1) {$X^{n-1}$};
\node (X6) at (6,1) {$X^n$};
\node (X8) at (-2,-1) {$Y^0$};
\node (X9) at (0,-1) {$Y^1$};
\node (X10) at (2,-1) {$\ldots$};
\node (X11) at (4,-1) {$Y^{n-1}$};
\node (X12) at (6,-1) {$Y^n$};
\draw [->,thick] (X2) -- (X8) node [midway,left] {$h$};
\draw [->,thick] (X3) -- (X9) node [midway,left] {$t$};
\draw [->,thick] (X5) -- (X11) node [midway,left] {};
\draw [->,thick] (X6) -- (X12) node [midway,left] {};
\draw [->,thick] (X2) -- (X3) node [midway,above] {$f$};
\draw [->,thick] (X3) -- (X4) node [midway,above] {};
\draw [->,thick] (X4) -- (X5) node [midway,above] {};
\draw [->,thick] (X5) -- (X6) node [midway,above] {};
\draw [->,thick] (X8) -- (X9) node [midway,above] {$g$};
\draw [->,thick] (X9) -- (X10) node [midway,above] {};
\draw [->,thick] (X10) -- (X11) node [midway,above] {};
\draw [->,thick] (X11) -- (X12) node [midway,above] {};
\end{tikzpicture}
\end{center}
be an $n$-pushout diagram where the top row is right $n$-exact. By Theorem \ref{theorem1}, $g$ is a monomorphism. Since $M$ is a mono functor, then $M(g)(y)\neq 0$ and hence $E(g)(y)\neq 0$. On the other hand $E(g)(y)=E(g)oE(h)(x)=E(t)oE(f)(x)=0$, which is a contradiction.
\end{proof}
\end{lemma}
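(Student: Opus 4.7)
The plan is to argue by contradiction. Suppose $E$ is not a mono functor, so there exists a monomorphism $f:X^0\rightarrow X^1$ in $\mathcal{M}$ and a nonzero element $x\in E(X^0)$ with $E(f)(x)=0$. I want to transport this failure of monicity down into $M$ by using essentiality, and then derive a contradiction with the fact that $M$ is already known to be a mono functor.

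First I would build the subfunctor $F\subseteq E$ ``generated by $x$'' in the obvious way: set
\begin{equation}
F(Y)=\{\,E(h)(x)\mid h\in\mathcal{M}(X^0,Y)\,\}\subseteq E(Y), \notag
\end{equation}
which is easily seen to be an additive subfunctor of $E$ containing $x$, hence nonzero. Since $M\hookrightarrow E$ is essential, $F\cap M$ cannot be the zero subfunctor, so there is some object $Y^0\in\mathcal{M}$ and a nonzero element $y\in F(Y^0)\cap M(Y^0)$. By definition of $F$, the element $y$ has the form $y=E(h)(x)$ for some morphism $h:X^0\rightarrow Y^0$.

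Next I would feed the pair $(f,h)$ into the $n$-pushout machinery. Choose an $n$-cokernel of $f$ to extend $f:X^0\rightarrow X^1$ into a right $n$-exact sequence $X^0\rightarrow X^1\rightarrow\cdots\rightarrow X^n$, and form an $n$-pushout along $h$ (which exists by Theorem~\ref{theorem1}(i)), producing
\begin{center}
\begin{tikzpicture}
\node (A) at (-2,1) {$X^0$};
\node (B) at (0,1)  {$X^1$};
\node (C) at (-2,-1){$Y^0$};
\node (D) at (0,-1) {$Y^1$};
\draw [->,thick] (A) -- (B) node [midway,above] {$f$};
\draw [->,thick] (A) -- (C) node [midway,left]  {$h$};
\draw [->,thick] (B) -- (D) node [midway,right] {$t$};
\draw [->,thick] (C) -- (D) node [midway,below] {$g$};
\end{tikzpicture}
\end{center}
as the first square of an $n$-pushout diagram. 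The critical input from axiom (A2), packaged in Theorem~\ref{theorem1}(ii), is that since $f$ is a monomorphism, so is $g:Y^0\rightarrow Y^1$.

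Finally I would read off the contradiction. Because $M$ is a mono functor and $0\neq y\in M(Y^0)$, we have $M(g)(y)\neq 0$, hence also $E(g)(y)\neq 0$ (using $M\subseteq E$). On the other hand, the $n$-pushout square commutes, so
\begin{equation}
E(g)(y)=E(g)E(h)(x)=E(gh)(x)=E(tf)(x)=E(t)E(f)(x)=E(t)(0)=0, \notag
\end{equation}
contradiction. The only nontrivial step is the production of the monomorphism $g$ from $f$: this is precisely where one needs the $n$-abelian axiom rather than just the existence of $n$-pushouts. Once Theorem~\ref{theorem1}(ii) is in hand, the rest is formal manipulation of the Yoneda-type subfunctor $F$ and the essentiality hypothesis.
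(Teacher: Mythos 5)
Your proposal is correct and follows essentially the same route as the paper's own proof: the same subfunctor $F$ generated by $x$, the same use of essentiality to find $0\neq y\in F(Y^0)\cap M(Y^0)$, the same $n$-pushout of a right $n$-exact sequence extending $f$ along $h$ with the monomorphism $g$ supplied by the cited theorem on $n$-pushouts, and the same concluding computation $E(g)(y)=E(t)E(f)(x)=0$ contradicting $M(g)(y)\neq 0$. No gaps to report.
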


By the above lemma, $\mathbb{M}(\mathcal{M},\mathcal{G})\subseteq (\mathcal{M},\mathcal{G})$ is a full subcategory closed under subobjects, products and essential extensions. Therefore all results of Section 7.2 of \cite{Fr} are valid. We summarize basic results of Section 7.2 of \cite{Fr} in the following proposition.

An object $M$ of $\mathbb{M}(\mathcal{M},\mathcal{G})$ is called torsion if for any object $N$ of $\mathbb{M}(\mathcal{M},\mathcal{G})$, $\Hom(M,N)=0$.

\begin{proposition}\label{propositionf1}
The inclusion functor $I:\mathbb{M}(\mathcal{M},\mathcal{G}) \hookrightarrow (\mathcal{M},\mathcal{G})$ has a left adjoint $\mathbb{M}:(\mathcal{M},\mathcal{G}) \rightarrow \mathbb{M}(\mathcal{M},\mathcal{G})$ such that for each $F\in (\mathcal{M},\mathcal{G})$ the morphism $F\rightarrow \mathbb{M}(F)$ is an epimorphism. The kernel of this morphism is the maximal torsion subobject of $F$.
\end{proposition}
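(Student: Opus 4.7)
The plan is to construct the reflector $\mathbb{M}$ explicitly as $\mathbb{M}(F) := F/T(F)$, where
\[
T(F)(X) := \{\, a \in F(X) \mid \text{there exists a monomorphism } f\colon X\to Y \text{ in } \mathcal{M} \text{ with } F(f)(a)=0\,\}.
\]
The natural map $F \to F/T(F)$ will be a quotient, hence an epimorphism in the Grothendieck category $(\mathcal{M},\mathcal{G})$, and the strategy is to verify that $F/T(F)$ lies in $\mathbb{M}(\mathcal{M},\mathcal{G})$ and is initial among mono functors receiving a map from $F$.

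The first, and main, technical step is checking that $T(F)$ really is a subfunctor of $F$. Given $a \in T(F)(X)$ killed by a monomorphism $f\colon X \to Y$ and any morphism $\phi\colon X \to X'$ in $\mathcal{M}$, I must produce a monomorphism $X' \to Y'$ killing $F(\phi)(a)$. To do this I first extend $f$ to a complex by appending an $n$-cokernel (which exists by axiom (A1)) and then take an $n$-pushout of this complex along $\phi$ using Theorem \ref{theorem1}(i); Theorem \ref{theorem1}(ii) guarantees that the induced morphism $X' \to Y'$ is again a monomorphism, while commutativity of the pushout square forces $F(X' \to Y')(F(\phi)(a)) = 0$. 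This is where the $n$-abelian hypothesis enters essentially.

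Next I would verify that $F/T(F)$ is a mono functor. Suppose $f\colon X \to Y$ is a monomorphism and $\bar{a} \in (F/T(F))(X)$ maps to $0$ in $(F/T(F))(Y)$. Then $F(f)(a) \in T(F)(Y)$, so there is a monomorphism $g\colon Y \to Z$ with $F(g)(F(f)(a))=0$; since $gf$ is a monomorphism, $a \in T(F)(X)$ and $\bar a = 0$. The adjunction is now almost automatic: for any natural transformation $\eta\colon F \to N$ with $N$ a mono functor and any $a \in T(F)(X)$ with killing mono $f\colon X \to Y$, naturality gives $N(f)(\eta_X(a)) = \eta_Y(F(f)(a)) = 0$, and injectivity of $N(f)$ forces $\eta_X(a) = 0$. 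Hence $\eta$ factors uniquely through $F/T(F)$, exhibiting $\mathbb{M}$ as left adjoint to the inclusion.

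Finally, to identify $T(F)$ as the maximal torsion subobject: any subfunctor $S \subseteq F$ that is a mono functor with $\Hom(S,N)=0$ for every $N \in \mathbb{M}(\mathcal{M},\mathcal{G})$ must map to zero in $F/T(F)$, so $S \subseteq T(F)$; and the universal argument of the previous paragraph applied to the inclusion $T(F) \hookrightarrow F$ followed by any map $T(F) \to N$ (extended first to $F$ via injectivity of $N$, which is available by Proposition \ref{pro}) shows $T(F)$ is itself torsion in the appropriate sense. The main obstacle throughout is the first step: the subfunctor property of $T(F)$ relies critically on transporting a monomorphism along an arbitrary morphism, a manoeuvre that in an ordinary abelian category is a trivial pushout but in the $n$-abelian setting requires the full $n$-pushout machinery of Theorem \ref{theorem1}.
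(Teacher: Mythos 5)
Your route is genuinely different from the paper's. The paper gives no direct construction at all: it observes, via its preceding lemma on essential extensions, that $\mathbb{M}(\mathcal{M},\mathcal{G})$ is closed under products, subobjects and essential extensions, and then simply invokes the general theory of such reflective subcategories from Section 7.2 of \cite{Fr}, where the reflection is obtained abstractly (as the image of $F$ in a product of its quotients lying in the subcategory). You instead build the reflector by hand as $F/T(F)$ with an explicit elementwise description of the torsion subobject, and you correctly locate where the $n$-abelian structure enters: Theorem \ref{theorem1}(ii) is what lets you transport a killing monomorphism along an arbitrary morphism, which is exactly the same $n$-pushout trick the paper uses to prove closure under essential extensions. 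Your version buys a concrete description of the maximal torsion subobject that the paper's citation does not provide; the paper's version is shorter and defers every verification to \cite{Fr}.

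There is one verification you skipped that is not automatic: that $T(F)(X)$ is a subgroup of $F(X)$, i.e.\ closed under addition. If a monomorphism $f\colon X\to Y$ kills $a$ and a monomorphism $g\colon X\to Z$ kills $b$, there is no reason a priori that $f$, $g$, or the diagonal $X\to Y\oplus Z$ kills $a+b$. The fix is the same manoeuvre you already use: push out (the complex extending) $g$ along $f$ via Theorem \ref{theorem1} to obtain a monomorphism $g'\colon Y\to W$ with $g'f=\phi g$ for some $\phi\colon Z\to W$; then the composite monomorphism $g'f$ kills both $a$ and $b$, hence $a+b$. Without this step $F/T(F)$ is not even defined, so it must be said. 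Two smaller points: in showing that $T(F)$ is torsion you should extend a map $T(F)\to N$ along the injective envelope $N\hookrightarrow E$, which is a mono functor by the lemma on essential extensions and exists by the quoted Grothendieck-category theorem; Proposition \ref{pro} (on injective cogenerators) is not quite the right citation there. And the uniqueness of the factorization through $F/T(F)$ uses that the quotient map is an epimorphism, which is fine since colimits in $(\mathcal{M},\mathcal{G})$ are computed pointwise.
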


$\mathbb{M}(\mathcal{M},\mathcal{G})$ is not in general an abelian category. There may be a monomorphism in $\mathbb{M}(\mathcal{M},\mathcal{G})$ which is not a kernel of a morphism in $\mathbb{M}(\mathcal{M},\mathcal{G})$. To fix this we introduce the subcategory of absolutely pure functors.

\begin{definition}$($\cite[Page 144]{Fr}$)$
A subfunctor $F^{\prime}\subseteq F$ in $\mathbb{M}(\mathcal{M},\mathcal{G})$ is said to be a pure subfunctor if the quotient functor $\dfrac{F}{F^{\prime}}\in \mathbb{M}(\mathcal{M},\mathcal{G})$. A mono functor is called absolutely pure if and only if whenever it appears as a subfunctor of a mono functor it is a pure subfunctor. We denote by $\mathcal{L}_2(\mathcal{M},\mathcal{G})\subseteq \mathbb{M}(\mathcal{M},\mathcal{G})$ the full subcategory of absolutely pure functors.
\end{definition}

In Theorem \ref{theoremc} we show that a mono functor $M\in (\mathcal{M},\mathcal{G})$ is absolutely pure if and only if whenever apply $M$ to a left $n$-exact sequence $X^0\rightarrow X^1 \rightarrow \ldots \rightarrow X^n\rightarrow X^{n+1}$ in $\mathcal{M}$, then $0\rightarrow M(X^0)\rightarrow M(X^1)\rightarrow M(X^2)$ is an exact sequence of abelian groups. For this nice property we use the symbol $\mathcal{L}_2(\mathcal{M},\mathcal{G})$ for the full subcategory of absolutely pure functors.

In the following proposition, which is special case of Theorems 7.28 and 7.29 of \cite{Fr}, we summarize the basic properties of the category $\mathcal{L}_2(\mathcal{M},\mathcal{G})$.

\begin{proposition}\label{propositionf2}
The inclusion functor $I:\mathcal{L}_2(\mathcal{M},\mathcal{G})\hookrightarrow \mathbb{M}(\mathcal{M},\mathcal{G})$ has a left adjoint $R:\mathbb{M}(\mathcal{M},\mathcal{G})\rightarrow \mathcal{L}_2(\mathcal{M},\mathcal{G})$ such that for each $M\in \mathbb{M}(\mathcal{M},\mathcal{G})$ the morphism $M\rightarrow R(M)$ is a monomorphism.
\end{proposition}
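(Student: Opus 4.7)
The plan is to invoke the framework developed by Freyd in \cite[Theorems 7.28 and 7.29]{Fr}, whose hypotheses have essentially already been established for $\mathbb{M}(\mathcal{M},\mathcal{G})$ in the preceding material. First, I would show that $\mathbb{M}(\mathcal{M},\mathcal{G})$ has enough injectives, and indeed that every $M\in\mathbb{M}(\mathcal{M},\mathcal{G})$ admits an injective envelope inside $\mathbb{M}(\mathcal{M},\mathcal{G})$. Given such an $M$, take its injective envelope $M\hookrightarrow E$ in the Grothendieck category $(\mathcal{M},\mathcal{G})$; by the preceding lemma, essential extensions of mono functors remain mono, so $E$ lies in $\mathbb{M}(\mathcal{M},\mathcal{G})$. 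Since monomorphisms in $\mathbb{M}(\mathcal{M},\mathcal{G})$ are also monomorphisms in $(\mathcal{M},\mathcal{G})$, $E$ is injective in $\mathbb{M}(\mathcal{M},\mathcal{G})$ as well.

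Second, I would verify that every injective object $E\in\mathbb{M}(\mathcal{M},\mathcal{G})$ is absolutely pure. Given a monomorphism $E\hookrightarrow F$ in $\mathbb{M}(\mathcal{M},\mathcal{G})$, injectivity of $E$ yields a retraction $F\to E$, so $F\cong E\oplus(F/E)$; as a direct summand of the mono functor $F$, the quotient $F/E$ is itself a mono functor, which is precisely the purity condition. In particular, any $M\in\mathbb{M}(\mathcal{M},\mathcal{G})$ embeds monomorphically into an absolutely pure functor, namely its injective envelope $E$. Following Freyd, I would then define $R(M)$ to be the smallest absolutely pure subfunctor of $E$ that contains $M$. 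The unit $M\to R(M)$ is a monomorphism by construction, and the universal property (that any morphism $M\to N$ with $N$ absolutely pure factors uniquely through $R(M)$) follows by the argument of \cite[Theorem 7.29]{Fr}: one extends the given morphism to an injective envelope of $N$, and uses the minimality of $R(M)$ to show the extension restricts back into $N$.

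The main technical obstacle is twofold. First, one must verify that the intersection of a family of absolutely pure subfunctors of a given mono functor is again absolutely pure; this is what makes the \emph{smallest} absolutely pure subfunctor containing $M$ well-defined. Second, one must establish the universal property of $R(M)$, which requires a careful argument relating absolutely pure subfunctors of different mono functors via maps between their injective envelopes. Both points are proved by diagram chases in the ambient abelian category $(\mathcal{M},\mathcal{G})$, using the characterization of purity as the quotient being a mono functor together with the snake lemma, and constitute the substantive content of \cite[Theorems 7.28 and 7.29]{Fr} which we may invoke directly since the ambient structural hypotheses on $\mathbb{M}(\mathcal{M},\mathcal{G})$ (closure under subobjects, products, and essential extensions in the Grothendieck category $(\mathcal{M},\mathcal{G})$) have already been checked.
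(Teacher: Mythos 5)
Your proposal is correct and takes essentially the same approach as the paper: the paper offers no independent proof, but (after the preceding lemma establishing that $\mathbb{M}(\mathcal{M},\mathcal{G})$ is closed under subobjects, products and essential extensions in the Grothendieck category $(\mathcal{M},\mathcal{G})$) simply invokes Freyd's Theorems 7.28 and 7.29, exactly as you do. Your extra sketch of the internal mechanism --- injective envelopes remain mono functors, injective mono functors are absolutely pure, and $R(M)$ is the smallest absolutely pure subfunctor of the injective envelope containing $M$ --- is accurate but goes beyond what the paper records.
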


The following theorem is special case of Theorem 7.31 of \cite{Fr}.

\begin{theorem}\label{theoremab}
$\mathcal{L}_2(\mathcal{M},\mathcal{G})$ is an abelian category and every object of $\mathcal{L}_2(\mathcal{M},\mathcal{G})$ has an injective envelope.
\end{theorem}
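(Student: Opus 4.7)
The plan is to adapt the argument of \cite[Theorem 7.31]{Fr}, since the ambient setup — a Grothendieck category $(\mathcal{M},\mathcal{G})$ with enough injectives, a reflective mono-functor subcategory $\mathbb{M}(\mathcal{M},\mathcal{G})$ (Proposition \ref{propositionf1}) that is closed under subobjects, products and essential extensions, and a further reflective subcategory $\mathcal{L}_2(\mathcal{M},\mathcal{G})$ (Proposition \ref{propositionf2}) — has been set up to be formally identical to the classical one; the proof should therefore transfer verbatim after one checks the ingredients.

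For the abelian structure on $\mathcal{L}_2(\mathcal{M},\mathcal{G})$, I would build kernels and cokernels as follows. Given $f\colon F\to G$ in $\mathcal{L}_2(\mathcal{M},\mathcal{G})$, let $K$ be the kernel of $f$ computed in $(\mathcal{M},\mathcal{G})$. As a subfunctor of the mono functor $F$ it lies in $\mathbb{M}(\mathcal{M},\mathcal{G})$, and one then checks $K$ is absolutely pure by testing it against an arbitrary mono extension $K\hookrightarrow G'$. For the cokernel, take the cokernel $C$ in $(\mathcal{M},\mathcal{G})$ and apply the composite reflector $R\circ\mathbb{M}$ of Propositions \ref{propositionf1} and \ref{propositionf2}; the resulting object belongs to $\mathcal{L}_2(\mathcal{M},\mathcal{G})$ and its universal property is inherited from the two adjunctions. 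The remaining verification — that every monomorphism in $\mathcal{L}_2(\mathcal{M},\mathcal{G})$ is the kernel of its cokernel and every epimorphism is the cokernel of its kernel — is then a reformulation of the classical argument in our setting.

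For the existence of injective envelopes, the key observation is that any mono functor which is injective in $(\mathcal{M},\mathcal{G})$ is automatically absolutely pure: if such an $E$ sits monomorphically inside a mono functor $G$, injectivity makes the inclusion split, so $G/E$ is a direct summand of the mono functor $G$ and hence itself mono. Given $M\in\mathcal{L}_2(\mathcal{M},\mathcal{G})$, take its injective envelope $M\hookrightarrow E$ in the Grothendieck category $(\mathcal{M},\mathcal{G})$, which exists by the cited Theorem 6.25 of \cite{Fr}. The earlier lemma showing that essential extensions of mono functors are mono functors puts $E$ in $\mathbb{M}(\mathcal{M},\mathcal{G})$, and the preceding observation then places $E$ in $\mathcal{L}_2(\mathcal{M},\mathcal{G})$. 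Essentiality descends to the full subcategory $\mathcal{L}_2(\mathcal{M},\mathcal{G})$ automatically, so $M\hookrightarrow E$ remains an injective envelope there.

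The main obstacle is showing that $\mathcal{L}_2(\mathcal{M},\mathcal{G})$ is closed under the kernels computed in $(\mathcal{M},\mathcal{G})$, i.e.\ that kernels of morphisms between absolutely pure functors are again absolutely pure. This is not immediate from the definition, and is exactly the technical step that requires care; once this, together with the analogous analysis of the image of the reflector $R\circ\mathbb{M}$ on cokernels, is in place, the remaining abelian-category axioms and the identification of the injective envelope follow by standard bookkeeping as in \cite[Theorem 7.31]{Fr}.
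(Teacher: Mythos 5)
Your proposal follows the same route as the paper: the paper's entire proof of this statement is the observation that, once $\mathbb{M}(\mathcal{M},\mathcal{G})$ is known to be closed under subobjects, products and essential extensions, the theorem is a special case of \cite[Theorem 7.31]{Fr}, whose internal argument (kernels computed in $(\mathcal{M},\mathcal{G})$, cokernels via the composite reflector, injective envelopes inherited from the Grothendieck category) is exactly what you have sketched. The one step you flag as open --- that kernels of morphisms between absolutely pure functors are again absolutely pure --- is part of what the cited theorem supplies (and is recorded afterwards in Remark \ref{r3}), so deferring to \cite{Fr} at that point, as the paper does wholesale, completes your argument.
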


In the following remark we recall that how kernels and cokernels in $\mathcal{L}_2(\mathcal{M},\mathcal{G})$ are constructed.

\begin{remark}\label{r3}
Let $L_1\rightarrow L_2$ be any morphism in $\mathcal{L}_2(\mathcal{M},\mathcal{G})$. If $K\rightarrow L_1$ is a kernel of $L_1\rightarrow L_2$ in $(\mathcal{M},\mathcal{G})$ it is also kernel in $\mathcal{L}_2(\mathcal{M},\mathcal{G})$. If $L_2\rightarrow F$ is a cokernel of $L_1\rightarrow L_2$ in $(\mathcal{M},\mathcal{G})$ then the cokernel in $\mathcal{L}_2(\mathcal{M},\mathcal{G})$ is the composition $L_2\rightarrow F\rightarrow \mathbb{M}(F)\rightarrow R(\mathbb{M}(F))$. Thus a morphism in $\mathcal{L}_2(\mathcal{M},\mathcal{G})$ is epimorphism if and only if its cokernel in $(\mathcal{M},\mathcal{G})$ is torsion.
\end{remark}


\section{representation of $n$-abelian categories}

In this section, we first characterize the category of absolutely pure functors. Then we give a representation of $\mathcal{M}$ in $\mathcal{L}_2(\mathcal{M},\mathcal{G})$.
\begin{lemma}$($\cite[Lemma 2.64]{Fr}$)$ \label{lemma5}
Consider the following commutative diagram in an abelian category with exact columns and exact middle row.
 \begin{center}
\begin{tikzpicture}
\node (X1) at (-1.5,2) {$0$};
\node (X2) at (0,2) {$0$};
\node (X3) at (1.5,2) {$0$};
\node (X4) at (-3,.5) {$0$};
\node (X5) at (-1.5,.5) {$B_{11}$};
\node (X6) at (0,.5) {$B_{12}$};
\node (X7) at (1.5,.5) {$B_{13}$};
\node (X8) at (-3,-1) {$0$};
\node (X9) at (-1.5,-1) {$B_{21}$};
\node (X10) at (0,-1) {$B_{22}$};
\node (X11) at (1.5,-1) {$B_{23}$};
\node (X12) at (-3,-2.5) {$0$};
\node (X13) at (-1.5,-2.5) {$B_{31}$};
\node (X14) at (0,-2.5) {$B_{32}$};
\node (X15) at (-1.5,-4) {$0$};
\draw [->,thick] (X1) -- (X5) node [midway,above] {};
\draw [->,thick] (X2) -- (X6) node [midway,left] {};
\draw [->,thick] (X3) -- (X7) node [midway,right] {};
\draw [->,thick] (X4) -- (X5) node [midway,above] {};
\draw [->,thick] (X5) -- (X6) node [midway,left] {};
\draw [->,thick] (X6) -- (X7) node [midway,right] {};
\draw [->,thick] (X5) -- (X9) node [midway,above] {};
\draw [->,thick] (X6) -- (X10) node [midway,left] {};
\draw [->,thick] (X7) -- (X11) node [midway,right] {};
\draw [->,thick] (X8) -- (X9) node [midway,above] {};
\draw [->,thick] (X9) -- (X10) node [midway,left] {};
\draw [->,thick] (X10) -- (X11) node [midway,right] {};
\draw [->,thick] (X9) -- (X13) node [midway,above] {};
\draw [->,thick] (X10) -- (X14) node [midway,left] {};
\draw [->,thick] (X12) -- (X13) node [midway,right] {};
\draw [->,thick] (X13) -- (X14) node [midway,above] {};
\draw [->,thick] (X13) -- (X15) node [midway,left] {};
\end{tikzpicture}
\end{center}
Then the top row is exact if and only if the bottom row is exact.
\end{lemma}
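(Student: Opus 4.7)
The plan is to prove this by an elementary diagram chase. By the Freyd--Mitchell embedding theorem I may assume without loss of generality that the ambient abelian category is the category of abelian groups, so that I can manipulate elements pointwise; the two directions of the biconditional are essentially dual in spirit and reduce to the same kind of chase.

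For the direction ``top row exact $\Rightarrow$ bottom row exact'', I must show that $B_{31} \to B_{32}$ is a monomorphism. I would start with $x \in B_{31}$ satisfying $x \mapsto 0$ in $B_{32}$, lift it via the epimorphism $B_{21} \twoheadrightarrow B_{31}$ coming from exactness of the first column to obtain $y \in B_{21}$, and set $y' \in B_{22}$ to be the image of $y$. Commutativity forces $y' \mapsto 0$ in $B_{32}$, so exactness of the second column produces $z \in B_{12}$ with image $y'$. Chasing $z$ into $B_{23}$ along the two paths through $B_{22}$ and through $B_{13}$, and using that $B_{13} \to B_{23}$ is mono (column three exact), I would conclude $z \in \Ker(B_{12} \to B_{13})$; then exactness of the top row yields $w \in B_{11}$ with image $z$. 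A last application of the monomorphism $B_{21} \to B_{22}$ (middle row exact) identifies the image of $w$ in $B_{21}$ with $y$, and since $B_{11} \to B_{21} \to B_{31}$ is the zero composite (column one exact), this delivers $x = 0$.

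For the reverse implication, assume $B_{31} \to B_{32}$ is mono. I would first verify that $B_{11} \to B_{12}$ is a monomorphism, because its composition with $B_{12} \to B_{22}$ factors as $B_{11} \to B_{21} \to B_{22}$ with both factors monomorphisms, and that $B_{11} \to B_{12} \to B_{13}$ is zero, by chasing into $B_{23}$ and using that $B_{13} \to B_{23}$ is mono. For exactness at $B_{12}$, take $z \in \Ker(B_{12} \to B_{13})$, pass to its image $z' \in B_{22}$, note $z'$ dies in $B_{23}$, and apply exactness of the middle row to obtain $y \in B_{21}$ mapping to $z'$. Chasing $y$ into $B_{32}$ through both $B_{22}$ and $B_{31}$, and using the assumed monomorphism $B_{31} \hookrightarrow B_{32}$, shows the image of $y$ in $B_{31}$ is zero; exactness of column one then produces $w \in B_{11}$ with image $y$, and the monomorphism $B_{12} \to B_{22}$ forces the image of $w$ in $B_{12}$ to equal $z$.

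The main obstacle is purely organizational bookkeeping: each direction involves coordinating chases across all three columns simultaneously, and care is needed to invoke the correct piece of exactness at each step (column one to lift or annihilate, the middle row to travel between adjacent rows, and the column-three monomorphism to cancel images in $B_{13}$). No individual step is deep, but keeping track of which commutative square is in play where is the main source of potential error.
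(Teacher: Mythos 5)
Your proof is correct: I checked both chases step by step and each appeal to exactness (lifting along the epimorphism $B_{21}\twoheadrightarrow B_{31}$, travelling through the middle row, cancelling in $B_{23}$ via the monomorphism $B_{13}\hookrightarrow B_{23}$, and the final identifications using the monomorphisms $B_{21}\to B_{22}$ and $B_{12}\to B_{22}$) is used where it is actually available, and you correctly verify all three conditions for exactness of $0\to B_{11}\to B_{12}\to B_{13}$ in the converse direction. The paper itself gives no proof of this lemma --- it simply cites Freyd's Lemma 2.64 --- so there is nothing to match against; for the record, Freyd's own argument appears in his Chapter 2, well before his embedding theorems, and is carried out with purely categorical (subobject/image) manipulations, whereas you reduce to abelian groups via Freyd--Mitchell. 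That reduction is legitimate for a statement asserting exactness of a finite diagram, but it deserves the standard one-line caveat that the embedding theorem applies to \emph{small} abelian categories, so one must first replace the ambient category by a small full exact abelian subcategory containing the finitely many objects and morphisms in play (this matters here, since the paper applies the lemma in the large categories $(\mathcal{M},\mathcal{G})$ and $\mathcal{L}_2(\mathcal{M},\mathcal{G})$); alternatively, the identical chase goes through verbatim using generalized elements (``members'' in the sense of Mac Lane), avoiding the embedding altogether.
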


Now we give a higher-dimensional version of \cite[Theorem 7.27]{Fr}.

\begin{theorem} \label{theoremc}
A mono functor $M\in (\mathcal{M},\mathcal{G})$ is absolutely pure if and only if whenever $X^0\rightarrow X^1 \rightarrow \ldots \rightarrow X^n\rightarrow X^{n+1}$ is a left $n$-exact sequence in $\mathcal{M}$, then $0\rightarrow M(X^0)\rightarrow M(X^1)\rightarrow M(X^2)$ is an exact sequence of abelian groups.
\begin{proof}
Consider the exact sequence $0\rightarrow M\rightarrow E\rightarrow F\rightarrow 0$, where $E$ is an injective envelope of $M$. First assume that $M$ is absolutely pure and consider an arbitrary left $n$-exact sequence $X^0\rightarrow X^1\rightarrow \ldots \rightarrow X^n\rightarrow X^{n+1}$. Since injective functors are $n$-exact, the middle row of the following diagram is exact. Therefore the following commutative diagram satisfies the assumptions of Lemma \ref{lemma5}.
 \begin{center}
\begin{tikzpicture}
\node (X1) at (-2,2) {$0$};
\node (X2) at (0,2) {$0$};
\node (X3) at (2,2) {$0$};
\node (X4) at (-4,.5) {$0$};
\node (X5) at (-2,.5) {$M(X^0)$};
\node (X6) at (0,.5) {$M(X^1)$};
\node (X7) at (2,.5) {$M(X^2)$};
\node (X8) at (-4,-1) {$0$};
\node (X9) at (-2,-1) {$E(X^0)$};
\node (X10) at (0,-1) {$E(X^1)$};
\node (X11) at (2,-1) {$E(X^2)$};
\node (X12) at (-4,-2.5) {$0$};
\node (X13) at (-2,-2.5) {$F(X^0)$};
\node (X14) at (0,-2.5) {$F(X^1)$};
\node (X15) at (-2,-4) {$0$};
\draw [->,thick] (X1) -- (X5) node [midway,above] {};
\draw [->,thick] (X2) -- (X6) node [midway,left] {};
\draw [->,thick] (X3) -- (X7) node [midway,right] {};
\draw [->,thick] (X4) -- (X5) node [midway,above] {};
\draw [->,thick] (X5) -- (X6) node [midway,left] {};
\draw [->,thick] (X6) -- (X7) node [midway,right] {};
\draw [->,thick] (X5) -- (X9) node [midway,above] {};
\draw [->,thick] (X6) -- (X10) node [midway,left] {};
\draw [->,thick] (X7) -- (X11) node [midway,right] {};
\draw [->,thick] (X8) -- (X9) node [midway,above] {};
\draw [->,thick] (X9) -- (X10) node [midway,left] {};
\draw [->,thick] (X10) -- (X11) node [midway,right] {};
\draw [->,thick] (X9) -- (X13) node [midway,above] {};
\draw [->,thick] (X10) -- (X14) node [midway,left] {};
\draw [->,thick] (X12) -- (X13) node [midway,right] {};
\draw [->,thick] (X13) -- (X14) node [midway,above] {};
\draw [->,thick] (X13) -- (X15) node [midway,left] {};
\end{tikzpicture}
\end{center}
Since $M$ is absolutely pure, the bottom row is exact. Therefore by Lemma \ref{lemma5} the top row is exact and the result follows.
Now assume that $M$ has desired property. Thus by Lemma \ref{lemma5}, for any exact sequence $0\rightarrow M\rightarrow E\rightarrow F\rightarrow 0$, $F$ is a mono functor. Now let $0\rightarrow M\rightarrow N\rightarrow P\rightarrow 0$ be an exact sequence in $(\mathcal{M},\mathcal{G})$ such that $N$ is a mono functor. We must show that $P$ is also a mono functor. Let $M\rightarrow E$ be an injective envelope of $M$ and construct the following commutative diagram.
 \begin{center}
\begin{tikzpicture}
\node (X1) at (-4,1) {$0$};
\node (X2) at (-2,1) {$M$};
\node (X3) at (0,1) {$N$};
\node (X4) at (2,1) {$P$};
\node (X5) at (4,1) {$0$};
\node (X6) at (-4,-1) {$0$};
\node (X7) at (-2,-1) {$M$};
\node (X8) at (0,-1) {$E$};
\node (X9) at (2,-1) {$F$};
\node (X10) at (4,-1) {$0$};
\draw [->,thick] (X1) -- (X2) node [midway,above] {};
\draw [->,thick] (X2) -- (X3) node [midway,left] {};
\draw [->,thick] (X3) -- (X4) node [midway,right] {};
\draw [->,thick] (X4) -- (X5) node [midway,above] {};
\draw [->,thick] (X2) -- (X7) node [midway,left] {$id$};
\draw [->,thick] (X3) -- (X8) node [midway,right] {};
\draw [->,thick] (X4) -- (X9) node [midway,above] {};
\draw [->,thick] (X6) -- (X7) node [midway,left] {};
\draw [->,thick] (X7) -- (X8) node [midway,right] {};
\draw [->,thick] (X8) -- (X9) node [midway,above] {};
\draw [->,thick] (X9) -- (X10) node [midway,left] {};
\end{tikzpicture}
\end{center}
By the dual of \cite[Proposition 2.12]{Bu}, the right hand square is pullback and pushout diagram and hence the induced sequence $0\rightarrow N\rightarrow P\oplus E\rightarrow F\rightarrow 0$ is exact. Since $\mathbb{M}(\mathcal{M},\mathcal{G})$ is extension closed, $P$ is a mono functor and the result follows.
\end{proof}
\end{theorem}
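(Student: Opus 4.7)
The plan is to prove both implications via the $3\times 3$-style Lemma~\ref{lemma5}, with the injective envelope of $M$ in the Grothendieck category $(\mathcal{M},\mathcal{G})$ serving as the intermediate object in both directions.

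For the forward direction I take an injective envelope $M \hookrightarrow E$ in $(\mathcal{M},\mathcal{G})$ and set $F = E/M$. Since $M$ is mono and the envelope is essential, the essential-extension lemma of Subsection~3.3 forces $E \in \mathbb{M}(\mathcal{M},\mathcal{G})$; absolute purity of $M$ inside this mono functor $E$ then gives $F \in \mathbb{M}(\mathcal{M},\mathcal{G})$, so that $0 \to M \to E \to F \to 0$ is a short exact sequence of mono functors. Given a left $n$-exact sequence $X^0 \to X^1 \to \cdots \to X^{n+1}$, I evaluate this short exact sequence on $X^0, X^1, X^2$ to obtain the $3\times 3$ diagram required by Lemma~\ref{lemma5}: the three columns are exact (pointwise evaluation of a short exact sequence of functors) and the middle row $0 \to E(X^0) \to E(X^1) \to E(X^2)$ is exact because an injective object of $(\mathcal{M},\mathcal{G})$ is $n$-exact in the sense of Definition~\ref{def1}. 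Lemma~\ref{lemma5} then reduces exactness of the top row $0 \to M(X^0) \to M(X^1) \to M(X^2)$ to the assertion that $F(d^0)$ is injective, which is immediate since $d^0$ is a monomorphism and $F$ is mono.

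For the converse I assume $M \in \mathbb{M}(\mathcal{M},\mathcal{G})$ satisfies the stated property and aim to show it is absolutely pure. First I verify that the quotient $F = E/M$ of the injective envelope is mono: given any monomorphism $d^0 \colon X^0 \to X^1$ in $\mathcal{M}$, axiom (A2) extends it (using any $n$-cokernel of $d^0$) to an $n$-exact and therefore left $n$-exact sequence; feeding this into the same $3\times 3$ diagram and running Lemma~\ref{lemma5} in the opposite direction, the top row is exact by hypothesis on $M$ and the middle row is exact as before, so the bottom row $0 \to F(X^0) \to F(X^1)$ is exact, i.e.\ $F(d^0)$ is injective. To finish, let $0 \to M \to N \to P \to 0$ be an arbitrary short exact sequence with $N$ mono; lifting $N \to E$ by injectivity of $E$ produces a morphism of short exact sequences which is the identity on $M$, and the dual of \cite[Proposition~2.12]{Bu} shows the right-hand square is both pushout and pullback, producing an exact sequence $0 \to N \to P \oplus E \to F \to 0$. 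Since $\mathbb{M}(\mathcal{M},\mathcal{G})$ is closed under extensions (a direct pointwise diagram chase), $P \oplus E$ is mono, and hence so is its summand $P$.

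The hardest step, and the one I expect to be the real obstacle, is justifying that an injective object of $(\mathcal{M},\mathcal{G})$ is $n$-exact rather than only right $n$-exact as established in Proposition~\ref{propositionf0}. The Yoneda argument used for right $n$-exactness does not dualize cleanly because $H^X = \mathcal{M}(X,-)$ encodes only right, not left, $n$-exactness of a sequence in $\mathcal{M}$ as an exact sequence of representables in $(\mathcal{M},\mathcal{G})$. I would remedy this by exploiting that the particular injective $E$ under consideration is also a mono functor (being the envelope of the mono functor $M$) and combining this with axiom~(A2) to compare a given left $n$-exact sequence with the $n$-exact completion of its initial monomorphism; once the middle row is known to be exact at the first three spots, everything above goes through routinely.
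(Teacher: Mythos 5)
Your proposal is correct and follows essentially the same route as the paper: both directions run the same $3\times 3$ diagram through Lemma~\ref{lemma5} using the injective envelope $M\hookrightarrow E$, and the converse finishes with the same B\"uhler pushout--pullback square and extension-closedness of $\mathbb{M}(\mathcal{M},\mathcal{G})$. Your extra care about why the middle row $0\to E(X^0)\to E(X^1)\to E(X^2)$ is exact is well placed, and your remedy (that $E$ is an injective \emph{mono} functor, hence $n$-exact and in particular left $n$-exact by Proposition 3.2, so that the weak-kernel relation $d^0=\ker d^1$ yields exactness at $E(X^1)$) is exactly what the paper's terse phrase ``injective functors are $n$-exact'' is implicitly relying on.
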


The representation functor $H:\mathcal{M}\rightarrow (\mathcal{M},\mathcal{G})$ sends an object $X\in \mathcal{M}$ to $H^X$ that is a left $n$-exact functor. Thus by the above theorem $H$ factor through the subcategory $\mathcal{L}_2(\mathcal{M},\mathcal{G})$ and we have the following commutative diagram:
 \begin{center}
\begin{tikzpicture}
\node (X1) at (-3,1) {$\mathcal{M}$};
\node (X2) at (3,1) {$(\mathcal{M},\mathcal{G})$};
\node (X3) at (0,-2) {$\mathcal{L}_2(\mathcal{M},\mathcal{G})$};
\draw [->,thick] (X1) -- (X2) node [midway,above] {$H$};
\draw [->,thick] (X1) -- (X3) node [midway,left] {$\widetilde{H}$};
\draw [thick, right hook->] (X3) -- (X2) node [midway,right] {};
\end{tikzpicture}
\end{center}

\begin{theorem}\label{theoremmain3}
The functor $\widetilde{H}:\mathcal{M}\rightarrow \mathcal{L}_2(\mathcal{M},\mathcal{G})$ is an $n$-exact functor.
\begin{proof}
Let $X^0\rightarrow X^1\rightarrow \ldots \rightarrow X^n\rightarrow X^{n+1}$ be an $n$-exact sequence. We must show that $0\rightarrow H^{X^{n+1}}\rightarrow H^{X^n}\rightarrow \ldots \rightarrow H^{X^1}\rightarrow H^{X^0}\rightarrow 0$ is an exact sequence in $\mathcal{L}_2(\mathcal{M},\mathcal{G})$. By Proposition \ref{pro}, $\mathcal{L}_2(\mathcal{M},\mathcal{G})$ has an injective cogenerator. Let $E$ be an injective cogenerator of $\mathcal{L}_2(\mathcal{M},\mathcal{G})$.  The sequence $0\rightarrow H^{X^{n+1}}\rightarrow H^{X^n}\rightarrow \ldots \rightarrow H^{X^1}\rightarrow H^{X^0}\rightarrow 0$ is exact if and only if the induced sequence
\begin{equation}
0\rightarrow \Hom(H^{X^0},E)\rightarrow \Hom(H^{X^1},E)\rightarrow \cdots \rightarrow \Hom(H^{X^n},E)\rightarrow \Hom(H^{X^{n+1}},E)\rightarrow 0  \notag
\end{equation}
is exact. By the Yoneda lemma the above sequence is isomorphic to the sequence
\begin{equation}
0\rightarrow E(X^0)\rightarrow E(X^1)\rightarrow \cdots \rightarrow E(X^n)\rightarrow E(X^{n+1})\rightarrow 0  \notag
\end{equation}
which is exact, because $E$ is an $n$-exact functor.
\end{proof}
\end{theorem}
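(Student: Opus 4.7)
My plan is to test exactness in $\mathcal{L}_2(\mathcal{M},\mathcal{G})$ against an injective cogenerator, translating the claim via the Yoneda lemma into a statement about the cogenerator's values on the given $n$-exact sequence.

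Fix an $n$-exact sequence $X^0 \to X^1 \to \cdots \to X^{n+1}$ in $\mathcal{M}$. The task is to show that the complex of representables
\[
0 \to H^{X^{n+1}} \to H^{X^n} \to \cdots \to H^{X^1} \to H^{X^0} \to 0
\]
is exact in $\mathcal{L}_2(\mathcal{M},\mathcal{G})$. By Theorem~\ref{theoremab} every object of $\mathcal{L}_2(\mathcal{M},\mathcal{G})$ embeds in an injective, so by Proposition~\ref{pro} the category admits an injective cogenerator $E$. For any such $E$ the contravariant functor $\Hom(-,E)$ is faithfully exact, in the sense that it both preserves and reflects exactness of complexes, so it suffices to show that $\Hom(-,E)$ carries the above complex to an exact sequence of abelian groups.

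Applying the Yoneda lemma gives natural isomorphisms $\Hom(H^{X^i},E) \cong E(X^i)$, so the desired statement becomes the exactness of
\[
0 \to E(X^0) \to E(X^1) \to \cdots \to E(X^n) \to E(X^{n+1}) \to 0
\]
in $\mathcal{G}$. This is precisely the assertion that $E \colon \mathcal{M} \to \mathcal{G}$ is an $n$-exact functor. I would verify this from two ingredients: first, since $E$ lies in $\mathcal{L}_2(\mathcal{M},\mathcal{G}) \subseteq \mathbb{M}(\mathcal{M},\mathcal{G})$ it automatically preserves monomorphisms; second, as an injective object it is right $n$-exact by Proposition~\ref{propositionf0}. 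The observation recorded earlier in the text, that a right $n$-exact functor which preserves monomorphisms is $n$-exact, then closes the argument.

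The main obstacle is the fine point that Proposition~\ref{propositionf0} is stated for injectives of $(\mathcal{M},\mathcal{G})$, whereas $E$ was produced only as an injective of the subcategory $\mathcal{L}_2(\mathcal{M},\mathcal{G})$. I would bridge this by invoking the reflective inclusions of Propositions~\ref{propositionf1} and~\ref{propositionf2}: since their left adjoints send the relevant short exact sequences to short exact sequences, the right adjoint inclusions transfer injectives upward, so $E$ is in fact injective in $(\mathcal{M},\mathcal{G})$ and Proposition~\ref{propositionf0} applies. Once this technicality is dispatched, the remainder of the argument—Yoneda, faithful exactness of $\Hom(-,E)$, and the upgrade from right $n$-exact plus mono to $n$-exact—is routine.
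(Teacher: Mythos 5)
Your proposal is correct and follows essentially the same route as the paper: test exactness against an injective cogenerator $E$ of $\mathcal{L}_2(\mathcal{M},\mathcal{G})$, apply the Yoneda lemma, and reduce everything to the $n$-exactness of $E$ as a functor. The only difference is that you spell out why $E$ is an $n$-exact functor (it is injective in $(\mathcal{M},\mathcal{G})$, hence right $n$-exact by Proposition \ref{propositionf0}, and it preserves monomorphisms as an object of $\mathbb{M}(\mathcal{M},\mathcal{G})$), a point the paper asserts without elaboration; your handling of the passage from injectivity in $\mathcal{L}_2(\mathcal{M},\mathcal{G})$ to injectivity in $(\mathcal{M},\mathcal{G})$ is sound.
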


\begin{lemma}\label{lemmam}
Let $f:X\rightarrow Y$ be a morphism in $\mathcal{M}$.
\begin{itemize}
\item[(i)]
If $H^f:H^Y\rightarrow H^X$ is a monomorphism in $\mathcal{L}_2(\mathcal{M},\mathcal{G})$, then $f$ is an epimorphism.
\item[(ii)]
If $H^f:H^Y\rightarrow H^X$ is an epimorphism in $\mathcal{L}_2(\mathcal{M},\mathcal{G})$, then $f$ is a monomorphism.
\end{itemize}
\begin{proof}
\begin{itemize}
\item[(i)] Let $X^0\overset{d^0}{\rightarrow} X^1\overset{d^1}{\rightarrow} \cdots \overset{d^{n-2}}{\rightarrow} X^{n-1}\overset{d^{n-1}}{\rightarrow} X$ be an $n$-kernel of $f$. Then applying $\widetilde{H}$ we have that $0\rightarrow H^Y\rightarrow H^X\rightarrow H^{X^{n-1}}$ is an exact sequence in $\mathcal{L}_2(\mathcal{M},\mathcal{G})$. Since the kernel in $\mathcal{L}_2(\mathcal{M},\mathcal{G})$ coincides with the kernel in $(\mathcal{M},\mathcal{G})$ by Remark \ref{r3}, it is obvious that $f$ is a cokernel of $X^{n-1}\rightarrow X$ and so is epimorphism.
\item[(ii)]
By assumption $H^f:H^Y\rightarrow H^X$ is an epimorphism in $\mathcal{L}_2(\mathcal{M},\mathcal{G})$ and then, by Remark \ref{r3} its cokernel is torsion in $(\mathcal{M},\mathcal{G})$. Then the functor $\dfrac{\mathcal{M}(X,-)}{\Imm H^f}$ is a torsion functor. If $f$ is not a monomorphism, then there is an object $Z$ and a morphism $0\neq g:Z\rightarrow X$ such that $fg=0$. Then there exists a nonzero natural transformation $\dfrac{\mathcal{M}(X,-)}{\Imm H^f}\rightarrow \mathcal{M}(Z,-)$. Therefore $\dfrac{\mathcal{M}(X,-)}{\Imm H^f}$ is not torsion which is a contradiction.
\end{itemize}
\end{proof}
\end{lemma}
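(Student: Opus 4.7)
The plan is to translate each hypothesis about $H^f$ in $\mathcal{L}_2(\mathcal{M},\mathcal{G})$ into the corresponding statement in the enveloping functor category $(\mathcal{M},\mathcal{G})$ via Remark \ref{r3}, and then to read it off pointwise using Yoneda.

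For (i), I would note that by Remark \ref{r3} the kernel of a morphism in $\mathcal{L}_2(\mathcal{M},\mathcal{G})$ agrees with its kernel in $(\mathcal{M},\mathcal{G})$, and kernels in the latter are computed pointwise. Thus $H^f$ is a monomorphism in $\mathcal{L}_2(\mathcal{M},\mathcal{G})$ if and only if, for every $Z\in\mathcal{M}$, the precomposition map $\mathcal{M}(Y,Z)\to \mathcal{M}(X,Z)$ is injective, which is by definition the statement that $f$ is an epimorphism.

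For (ii), I plan to argue by contradiction. Suppose $f$ is not a monomorphism, pick a nonzero $g:Z\to X$ with $fg=0$, and consider the Yoneda morphism $H^g:H^X\to H^Z$. Contravariance gives $H^g\circ H^f = H^{fg} = 0$. Since the representable $H^Z$ lies in $\mathcal{L}_2(\mathcal{M},\mathcal{G})$ (it is a mono functor and left $n$-exact, hence absolutely pure by Theorem \ref{theoremc}), $H^g$ is a morphism in $\mathcal{L}_2(\mathcal{M},\mathcal{G})$; the assumption that $H^f$ is an epimorphism there then forces $H^g=0$. But $H^g(X)(\id_X)=g\neq 0$, a contradiction. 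A parallel torsion-theoretic argument would also work: by Remark \ref{r3}, $H^X/\Imm(H^f)$ is torsion in $(\mathcal{M},\mathcal{G})$, while precomposition with $g$ induces a nonzero natural transformation from it to the mono functor $H^Z$, contradicting the definition of torsion.

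The main obstacle is nothing more than setting up the dictionary carefully: keeping the variance of $H$ straight and confirming that Remark \ref{r3} lets us transport monomorphism/epimorphism information between $\mathcal{L}_2(\mathcal{M},\mathcal{G})$ and $(\mathcal{M},\mathcal{G})$. Once that is done the proofs are immediate, and there is no need to invoke the heavier machinery of $n$-kernels or Theorem \ref{theoremmain3}.
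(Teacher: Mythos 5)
Your proof is correct, and in both parts it is more direct than the paper's. For (i) the paper first forms an $n$-kernel of $f$ and argues that $f$ is a cokernel of $X^{n-1}\rightarrow X$; you bypass this entirely by observing that, since kernels in $\mathcal{L}_2(\mathcal{M},\mathcal{G})$ are computed in $(\mathcal{M},\mathcal{G})$ (Remark \ref{r3}) and hence pointwise, monicity of $H^f$ amounts to injectivity of $\mathcal{M}(Y,Z)\rightarrow\mathcal{M}(X,Z)$ for every $Z$, which is precisely $f$ being an epimorphism. Both arguments rest on the same half of Remark \ref{r3}, but yours avoids the $n$-kernel detour (and, as a bonus, gives the converse implication for free). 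For (ii) the paper's argument is exactly your ``parallel'' torsion-theoretic one: the cokernel of an epimorphism of $\mathcal{L}_2(\mathcal{M},\mathcal{G})$ is torsion in $(\mathcal{M},\mathcal{G})$ by Remark \ref{r3}, and the nonzero induced map $\mathcal{M}(X,-)/\Imm H^f\rightarrow \mathcal{M}(Z,-)$ contradicts torsionness because $\mathcal{M}(Z,-)$ is a mono functor. Your primary argument instead tests the epimorphism directly against the representable $H^Z$, using that $H^Z$ is absolutely pure (which the paper establishes just before Theorem \ref{theoremmain3} via Theorem \ref{theoremc}); this is equally valid and arguably cleaner, since it never invokes the torsion theory. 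The two small points that make it airtight are exactly the ones you flag: $H^g(X)(\id_X)=g\neq 0$, so $H^g\neq 0$, and $H^Z\in\mathcal{L}_2(\mathcal{M},\mathcal{G})$, so the cancellation property defining epimorphisms in $\mathcal{L}_2(\mathcal{M},\mathcal{G})$ may be applied to $H^g\circ H^f=H^{fg}=0$.
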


In the following theorem we show that the functor $\widetilde{H}:\mathcal{M}\rightarrow \mathcal{L}_2(\mathcal{M},\mathcal{G})$ reflects $n$-exactness.

\begin{theorem}\label{theoremmain2}
The functor $\widetilde{H}:\mathcal{M}\rightarrow \mathcal{L}_2(\mathcal{M},\mathcal{G})$ satisfies the following statements:
\begin{itemize}
\item[(i)]
Let $X:X^0\rightarrow X^1\rightarrow \cdots \rightarrow X^n\rightarrow X^{n+1}$ be a sequence of objects and morphisms in $\mathcal{M}$. If $0\rightarrow H^{X^{n+1}}\rightarrow H^{X^n}\rightarrow \cdots \rightarrow H^{X^1}\rightarrow H^{X^0}$ is an exact sequence in $\mathcal{L}_2(\mathcal{M},\mathcal{G})$ then $X$ is a right $n$-exact sequence.
\item[(ii)]
Let $X:X^0\rightarrow X^1\rightarrow \cdots \rightarrow X^n\rightarrow X^{n+1}$ be a sequence of objects and morphisms in $\mathcal{M}$. If $H^{X^{n+1}}\rightarrow H^{X^n}\rightarrow \cdots \rightarrow H^{X^1}\rightarrow H^{X^0}\rightarrow 0$ is an exact sequence in $\mathcal{L}_2(\mathcal{M},\mathcal{G})$ then $X$ is a left $n$-exact sequence.
\end{itemize}
\begin{proof}
We only prove the statement $(i)$ and the proof of the statement $(ii)$ is similar. Assume that $0\rightarrow H^{X^{n+1}}\rightarrow H^{X^n}\rightarrow \cdots \rightarrow H^{X^1}\rightarrow H^{X^0}$ is an exact sequence in $\mathcal{L}_2(\mathcal{M},\mathcal{G})$. Since $\widetilde{H}$ is faithful, $X$ is a complex in $\mathcal{M}$. Consider the Diagram 2.2 for $X$. Applying the functor $\widetilde{H}$ we have the Diagram 4.1 in the abelian category $\mathcal{L}_2(\mathcal{M},\mathcal{G})$. Since the notion of abelian category is self-dual, the dual of Lemma \ref{lemma1} is also correct. By assumption, the top row of the Diagram 4.1 is exact and hence by the dual of Lemma \ref{lemma1}, the morphism $d_C^k=\left[\begin{array}{c}
\widetilde{H}(p_k^0) \\
\widetilde{H}(g_{k+1}^2) \\
\end{array}\right]:\widetilde{H}(Y_{k+1}^1)\rightarrow \widetilde{H}(X^k)\oplus \widetilde{H}(Y_{k+1}^2)$ is a monomorphism. By Lemma \ref{lemmam}, $[p_k^0, g_{k+1}^2]:X^k\oplus Y_{k+1}^2\rightarrow Y_{k+1}^1$ is an epimorphism. Therefore by Proposition \ref{propositionmain}, the complex $X$ is a right $n$-exact sequence.
\begin{diagram}\label{dia1}
\centering
\begin{tikzpicture}
\node (X0) at  (-5,4.5) {$0$};
\node (X1) at (-3,4.5) {$\widetilde{H}(X^{n+1})$};
\node (X2) at (-1,4.5) {$\widetilde{H}(X^n)$};
\node (X3) at (1,4.5) {$\widetilde{H}(X^{n-1})$};
\node (X4) at (3,4.5) {$\ldots$};
\node (X5) at (5,4.5) {$\widetilde{H}(X^1)$};
\node (X6) at (7,4.5) {$\widetilde{H}(X^0)$};
\node (X7) at (-3,3) {$\widetilde{H}(X^{n+1})$};
\node (X8) at (-1,3) {$\widetilde{H}(Y_n^1)$};
\node (X9) at (1,3) {$\widetilde{H}(Y_{n-1}^1)$};
\node (X10) at (5,3) {$\widetilde{H}(Y_1^1)$};
\node (X11) at (7,3) {$\widetilde{H}(Y_0^1)$};
\node (X12) at (-3,1.5) {$0$};
\node (X13) at (-1,1.5) {$\widetilde{H}(Y_n^2)$};
\node (X14) at (1,1.5) {$\widetilde{H}(Y_{n-1}^2)$};
\node (X15) at (5,1.5) {$\widetilde{H}(Y_1^2)$};
\node (X16) at (7,1.5) {$\widetilde{H}(Y_0^2)$};
\node (X17) at (-3,0) {$\vdots$};
\node (X18) at (-1,0) {$\vdots$};
\node (X19) at (1,0) {$\vdots$};
\node (X20) at (5,0) {$\vdots$};
\node (X21) at (7,0) {$\vdots$};
\node (X22) at (-3,-1.5) {$0$};
\node (X23) at (-1,-1.5) {$\widetilde{H}(Y_n^{n-1})$};
\node (X24) at (1,-1.5) {$\widetilde{H}(Y_{n-1}^{n-1})$};
\node (X25) at (5,-1.5) {$\widetilde{H}(Y_1^{n-1})$};
\node (X26) at (7,-1.5) {$\widetilde{H}(Y_0^{n-1})$};
\node (X27) at (-3,-3) {$0$};
\node (X28) at (-1,-3) {$\widetilde{H}(Y_n^n)$};
\node (X29) at (1,-3) {$\widetilde{H}(Y_{n-1}^n)$};
\node (X30) at (5,-3) {$\widetilde{H}(Y_1^n)$};
\node (X31) at (7,-3) {$\widetilde{H}(Y_0^n)$};
\node (X32) at (-3,-4.5) {$0$};
\node (X33) at (-1,-4.5) {$0$};
\node (X34) at (1,-4.5) {$0$};
\node (X35) at (5,-4.5) {$0$};
\node (X36) at (7,-4.5) {$0$};
\draw [->,thick] (X0) -- (X1) node [midway,right] {};
\draw [->,thick] (X1) -- (X2) node [midway,above] {};
\draw [->,thick] (X2) -- (X3) node [midway,above] {};
\draw [->,thick] (X3) -- (X4) node [midway,above] {};
\draw [->,thick] (X4) -- (X5) node [midway,right] {};
\draw [->,thick] (X5) -- (X6) node [midway,right] {};
\draw [->,thick] (X27) -- (X32) node [midway,right] {};
\draw [->,thick] (X22) -- (X27) node [midway,right] {};
\draw [->,thick] (X17) -- (X22) node [midway,right] {};
\draw [->,thick] (X12) -- (X17) node [midway,right] {};
\draw [->,thick] (X7) -- (X12) node [midway,above] {};
\draw [->,thick] (X1) -- (X7) node [midway,above,left] {$id$};
\draw [->,thick] (X28) -- (X33) node [midway,above] {};
\draw [->,thick] (X23) -- (X28) node [midway,right] {};
\draw [->,thick] (X18) -- (X23) node [midway,right] {};
\draw [->,thick] (X13) -- (X18) node [midway,right] {};
\draw [->,thick] (X8) -- (X13) node [midway,right] {};
\draw [->,thick] (X2) -- (X8) node [midway,right] {};
\draw [->,thick] (X29) -- (X34) node [midway,right] {};
\draw [->,thick] (X24) -- (X29) node [midway,above] {};
\draw [->,thick] (X19) -- (X24) node [midway,above] {};
\draw [->,thick] (X14) -- (X19) node [midway,above] {};
\draw [->,thick] (X9) -- (X14) node [midway,right] {};
\draw [->,thick] (X3) -- (X9) node [midway,right] {};
\draw [->,thick] (X30) -- (X35) node [midway,right] {};
\draw [->,thick] (X25) -- (X30) node [midway,right] {};
\draw [->,thick] (X20) -- (X25) node [midway,right] {};
\draw [->,thick] (X15) -- (X20) node [midway,right] {};
\draw [->,thick] (X10) -- (X15) node [midway,above] {};
\draw [->,thick] (X5) -- (X10) node [midway,above] {};
\draw [->,thick] (X31) -- (X36) node [midway,right] {};
\draw [->,thick] (X26) -- (X31) node [midway,right] {};
\draw [->,thick] (X21) -- (X26) node [midway,right] {};
\draw [->,thick] (X16) -- (X21) node [midway,right] {};
\draw [->,thick] (X11) -- (X16) node [midway,above] {};
\draw [->,thick] (X6) -- (X11) node [midway,right]{} ;
\draw [->,thick] (X7) -- (X2) node [midway,above] {};
\draw [->,thick] (X12) -- (X8) node [midway,right] {};
\draw [->,thick] (X27) -- (X23) node [midway,right] {};
\draw [->,thick] (X32) -- (X28) node [midway,right] {};
\draw [->,thick] (X8) -- (X3) node [midway,right] {};
\draw [->,thick] (X13) -- (X9) node [midway,right] {};
\draw [->,thick] (X28) -- (X24) node [midway,above] {};
\draw [->,thick] (X33) -- (X29) node [midway,right] {};
\draw [->,thick] (X10) -- (X6) node [midway,right] {};
\draw [->,thick] (X15) -- (X11) node [midway,right] {};
\draw [->,thick] (X30) -- (X26) node [midway,right] {};
\draw [->,thick] (X35) -- (X31) node [midway,right] {};
\end{tikzpicture}
\end{diagram}
\end{proof}
\end{theorem}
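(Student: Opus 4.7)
The strategy is to verify the criterion of Proposition \ref{propositionmain}, namely that for every $k$ the morphism $[p_k^0, g_{k+1}^2]:X^k\oplus Y_{k+1}^2\to Y_{k+1}^1$ attached to Diagram 2.2 is an epimorphism; part (ii) will then follow by the dual argument. First I would use faithfulness of $\widetilde{H}$ to conclude that $X$ is a complex in $\mathcal{M}$. Then I would build Diagram 2.2 attached to $X$ as in Proposition \ref{propositionmain}: the rightmost column is an $n$-exact sequence (an $n$-kernel), and the remaining columns are produced inductively by $n$-pullback, so by Proposition \ref{propositionmain}(iii) the mapping cone of every row is an $n$-exact sequence in $\mathcal{M}$.

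Now apply $\widetilde{H}$. Since it is $n$-exact by Theorem \ref{theoremmain3}, the resulting Diagram 4.1 in the abelian category $\mathcal{L}_2(\mathcal{M},\mathcal{G})$ has exact columns and exact mapping-cone rows, and the hypothesis of (i) says exactly that its top row is exact. I would then apply the dual of Lemma \ref{lemma1} to Diagram 4.1 (the dualization is legitimate since abelian categories are self-dual). That dual statement says that, under exactness of all columns and of all mapping cones, exactness of the top row forces each induced morphism $\widetilde{H}(Y_{k+1}^1)\to \widetilde{H}(X^k)\oplus \widetilde{H}(Y_{k+1}^2)$, with components $\widetilde{H}(p_k^0)$ and $\widetilde{H}(g_{k+1}^2)$, to be a monomorphism in $\mathcal{L}_2(\mathcal{M},\mathcal{G})$. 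Finally, Lemma \ref{lemmam}(i) converts this monomorphism back into an epimorphism $[p_k^0,g_{k+1}^2]:X^k\oplus Y_{k+1}^2\to Y_{k+1}^1$ in $\mathcal{M}$, and Proposition \ref{propositionmain} concludes that $X$ is right $n$-exact. Statement (ii) follows by running the formally dual argument, using the $n$-cokernel side of Proposition \ref{propositionmain}, Lemma \ref{lemma1} applied directly, and Lemma \ref{lemmam}(ii).

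The main obstacle is the bookkeeping around the variance of $\widetilde{H}$: it sends $f:X\to Y$ to $H^f:H^Y\to H^X$, so every horizontal arrow flips when Diagram 2.2 is mapped to Diagram 4.1. One must therefore check that the configuration of exact columns and exact mapping-cone rows required by the dual of Lemma \ref{lemma1} genuinely matches what Diagram 4.1 produces, and that the composite monomorphism extracted from that lemma is indeed $\widetilde{H}$ applied to $[p_k^0,g_{k+1}^2]$, so that Lemma \ref{lemmam}(i) can be invoked to transport the conclusion back into $\mathcal{M}$. Once this dictionary between the two diagrams is pinned down, the remainder of the argument is a straightforward assembly of the tools already built: $n$-exactness of $\widetilde{H}$, the reflection lemma for Yoneda-type morphisms, and the combinatorial diagram Lemma \ref{lemma1}.
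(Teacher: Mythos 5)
Your proposal reproduces the paper's own argument step for step: faithfulness to get a complex, construction of Diagram 2.2, application of $\widetilde{H}$ to obtain Diagram 4.1, the dual of Lemma \ref{lemma1} to extract the monomorphism $\widetilde{H}(Y_{k+1}^1)\rightarrow \widetilde{H}(X^k)\oplus\widetilde{H}(Y_{k+1}^2)$, Lemma \ref{lemmam}(i) to transport it back to an epimorphism $[p_k^0,g_{k+1}^2]$ in $\mathcal{M}$, and Proposition \ref{propositionmain} to conclude. The variance bookkeeping you worry about is handled correctly.

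There is, however, one step in your justification that would fail as stated. You claim that ``by Proposition \ref{propositionmain}(iii) the mapping cone of every row is an $n$-exact sequence in $\mathcal{M}$,'' and then invoke $n$-exactness of $\widetilde{H}$ (Theorem \ref{theoremmain3}) to get exact mapping-cone rows in Diagram 4.1. Proposition \ref{propositionmain}(iii) only asserts that the squares are $n$-pullback diagrams, whose mapping cones are by definition \emph{left} $n$-exact sequences; they are $n$-exact precisely when the morphisms $[p_k^0,g_{k+1}^2]$ are epimorphisms, i.e.\ precisely when $X$ is right $n$-exact --- which is the conclusion you are trying to reach, so assuming it here is circular. (Already for $n=1$ a left exact sequence $0\rightarrow A\rightarrow B\rightarrow C$ need not be exact.) Consequently you cannot appeal to $n$-exactness of $\widetilde{H}$, which says nothing about left $n$-exact sequences. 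What is actually needed is that $\widetilde{H}$ carries these left $n$-exact mapping cones to sequences that are exact at every spot except possibly the first, and this must be checked directly from the description of kernels, cokernels and epimorphisms in $\mathcal{L}_2(\mathcal{M},\mathcal{G})$ (Remark \ref{r3}): for instance, the first map of each mapping cone is a monomorphism in $\mathcal{M}$, and a monomorphism $f$ has $\mathrm{coker}(H^f)$ torsion because $\Hom(H^A/\mathrm{Im}\,H^f,N)\cong\Ker N(f)=0$ for every mono functor $N$; the intermediate spots are handled similarly from the weak-kernel property. To be fair, the paper's own proof leaves exactly this verification implicit, but since you chose to supply a justification, the one you supplied is the wrong one and should be replaced by the direct check.
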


We end this section with the following interesting question which is a goal of our next project.

\begin{question}
By the above notations let $\mathcal{L}(\mathcal{M},\mathcal{G})\subseteq \mathcal{L}_2(\mathcal{M},\mathcal{G})$ be the full subcategory of left $n$-exact functors.

Is $\mathcal{L}(\mathcal{M},\mathcal{G})$ an $n$-cluster tilting subcategory of $\mathcal{L}_2(\mathcal{M},\mathcal{G})$?

The positive answer to this question would show that any small $n$-abelian category is an exact full subcategory of an $n$-cluster tilting subcategory and, "who wants more? (S. MacLane)".
\end{question}


\section{applications}
Our main result in Section 4 implies that any statement about commutativity, ($n$-)exactness and existence of a morphism in a diagram in a small $n$-abelian category is true if and only if the corresponding statement is true in any abelian category. In this section, by using this fact, we prove some homological lemmas for $n$-abelian categories.

We need the following lemma in the proof of the next theorem.

\begin{lemma}\label{lemm4}
Let $\mathcal{M}$ be an $n$-abelian category, $\mathcal{A}$ an abelian category and $F:\mathcal{M}\rightarrow \mathcal{A}$ a functor.
\begin{itemize}
\item[(i)]
If $F$ is a left $n$-exact functor and $f:X\rightarrow Y$ is a weak kernel of $g:Y\rightarrow Z$, then $F(X)\rightarrow F(Y)\rightarrow F(Z)$ is exact.
\item[(ii)]
If $F$ is a right $n$-exact functor and $g:Y\rightarrow Z$ is a weak kernel of $f:X\rightarrow Y,$ then $F(X)\rightarrow F(Y)\rightarrow F(Z)$ is exact.
\end{itemize}
\begin{proof}
We only prove $(i)$, the proof of $(ii)$ is similar. If $n=1$ the proof is obvious. Assume that $n\geq 2$.  By the dual of \cite[Proposition 3.7]{J} we have the following left $n$-exact sequence
\begin{equation}
X^n\rightarrow \cdots \rightarrow X^3\rightarrow X^2 \rightarrow X\rightarrow Y\rightarrow Z. \notag
\end{equation}
Now by applying $F$, the statement follows.
\end{proof}
\end{lemma}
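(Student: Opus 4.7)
My plan is to reduce both parts to a direct application of the hypothesis on $F$, by first extending the given $3$-term configuration $X\to Y\to Z$ to a full left (respectively right) $n$-exact sequence of length $n+2$ in $\mathcal{M}$, and then reading off the desired exactness at $F(Y)$ from the exact sequence that $F$ produces in $\mathcal{A}$.

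For part (i), the case $n=1$ is immediate: in an abelian category a weak kernel $f\colon X\to Y$ of $g\colon Y\to Z$ factors as $f=kp$, where $k\colon K\to Y$ is the genuine kernel of $g$ and $p\colon X\to K$ is a split epimorphism (compare the two universal factorisations of $f$ through $k$ and of $k$ through $f$). Since $F$ is additive, $F(p)$ remains a split epi, so $\Imm F(f)=\Imm F(k)=\Ker F(g)$. For $n\geq 2$, I will invoke the dual of \cite[Proposition~3.7]{J} to produce objects $X^2,X^3,\ldots,X^n$ and morphisms which attach to the pair $(f,g)$ at the right-hand tail of a left $n$-exact sequence
\[
X^n\longrightarrow X^{n-1}\longrightarrow \cdots \longrightarrow X^2\longrightarrow X\xrightarrow{f} Y\xrightarrow{g} Z.
\]
Applying the left $n$-exact functor $F$ then yields the exact sequence $0\to F(X^n)\to \cdots \to F(X)\to F(Y)\to F(Z)$ in $\mathcal{A}$, whose exactness at $F(Y)$ is exactly the stated conclusion.

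Part (ii) proceeds dually. Reading the hypothesis so that $g\colon Y\to Z$ is a weak cokernel of $f\colon X\to Y$ (the form in which the argument of (i) dualises cleanly), I will use \cite[Proposition~3.7]{J} itself to extend on the right to a right $n$-exact sequence
\[
X\xrightarrow{f} Y\xrightarrow{g} Z\longrightarrow Z^2\longrightarrow \cdots \longrightarrow Z^n,
\]
and right $n$-exactness of $F$ then produces the exact sequence $F(X)\to F(Y)\to F(Z)\to \cdots \to F(Z^n)\to 0$ in $\mathcal{A}$, which is in particular exact at $F(Y)$.

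The only nontrivial ingredient is the extension of a single weak (co)kernel into a full $n$-(co)kernel; this is exactly the content of \cite[Proposition~3.7]{J} (and its dual) under axioms (A0) and (A1), and once the extended sequences are in place the argument is entirely formal.
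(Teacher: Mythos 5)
Your proof is correct and follows essentially the same route as the paper's: for $n\geq 2$ you extend the weak (co)kernel to a left (resp.\ right) $n$-exact sequence via \cite[Proposition 3.7]{J} and its dual, then apply the left (resp.\ right) $n$-exactness of $F$ and read off exactness at $F(Y)$. Your explicit treatment of the $n=1$ case (factoring the weak kernel through the genuine kernel by a split epimorphism) and your reading of ``weak kernel'' in (ii) as ``weak cokernel'' both correctly fill in details the paper leaves implicit.
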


The following two theorems have been proved in \cite[Theorems 3.5 and 3.6]{Lu} for small projectively generated $n$-abelian categories. We prove these theorems for general $n$-abelian categories.

\begin{theorem}\textbf{(5-Lemma).}\label{Theorem5}
Let $\mathcal{M}$ be an $n$-abelian category, and
 \begin{center}
\begin{tikzpicture}
\node (X1) at (-4,2) {$X^0$};
\node (X2) at (-2,2) {$X^1$};
\node (X3) at (0,2) {$X^2$};
\node (X4) at (2,2) {$X^3$};
\node (X5) at (4,2) {$X^4$};
\node (X6) at (-4,0) {$Y^0$};
\node (X7) at (-2,0) {$Y^1$};
\node (X8) at (0,0) {$Y^2$};
\node (X9) at (2,0) {$Y^3$};
\node (X10) at (4,0) {$Y^4$};
\draw [->,thick] (X1) -- (X2) node [midway,above] {$d_X^0$};
\draw [->,thick] (X2) -- (X3) node [midway,above] {$d_X^1$};
\draw [->,thick] (X3) -- (X4) node [midway,above] {$d_X^2$};
\draw [->,thick] (X4) -- (X5) node [midway,above] {$d_X^3$};
\draw [->,thick] (X6) -- (X7) node [midway,above] {$d_Y^0$};
\draw [->,thick] (X7) -- (X8) node [midway,above] {$d_Y^1$};
\draw [->,thick] (X8) -- (X9) node [midway,above] {$d_Y^2$};
\draw [->,thick] (X9) -- (X10) node [midway,above] {$d_Y^3$};
\draw [->,thick] (X1) -- (X6) node [midway,right] {$f^0$};
\draw [->,thick] (X2) -- (X7) node [midway,right] {$f^1$};
\draw [->,thick] (X3) -- (X8) node [midway,right] {$f^2$};
\draw [->,thick] (X4) -- (X9) node [midway,right] {$f^3$};
\draw [->,thick] (X5) -- (X10) node [midway,right] {$f^4$};
\end{tikzpicture}
\end{center}
be a commuting diagram in $\mathcal{M}$. If $f^1$ and $f^3$ are isomorphisms, $f^0$ is an epimorphism, $f^4$ is a
monomorphism and one of the following conditions holds, then $f^2$ is also an isomorphism.
\begin{itemize}
\item[(i)]
$d_X^i$ and $d_Y^i$ are weak cokernels of $d_X^{i-1}$ and $d_Y^{i-1}$ respectively, for $i = 1, 2, 3, 4$.
\item[(ii)]
$d_X^i$ and $d_Y^i$ are weak kernels of $d_X^{i+1}$ and $d_Y^{i+1}$ respectively, for $i =0, 1, 2, 3$.
\end{itemize}
\begin{proof}
We can construct a small $n$-abelian category consisting of all objects in the diagram by adding $n$-kernel and $n$-cokernels. By Lemma \ref{lemm4}, both rows of the diagram are exact in the corresponding abelian category. Since the five lemma is valid in abelian categories, the result follows by Theorems \ref{theoremmain3} and \ref{theoremmain2}.
\end{proof}
\end{theorem}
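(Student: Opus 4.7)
My plan is to reduce the $n$-abelian five-lemma to the ordinary five-lemma inside an abelian category, using the embedding $\widetilde{H}$ of Section 4. Because that embedding was built for a small $n$-abelian category, I first pass to a small full $n$-abelian subcategory $\mathcal{M}' \subseteq \mathcal{M}$ that contains all objects and morphisms occurring in the diagram; this is obtained by starting from the finite collection of data and closing up under splittings of idempotents and under chosen $n$-kernels and $n$-cokernels (axioms (A0) and (A1)), a process that only produces countably many new objects. All further work takes place inside $\mathcal{M}'$, and the embedding $\widetilde{H}:\mathcal{M}' \rightarrow \mathcal{L}_2(\mathcal{M}',\mathcal{G})$ is fully faithful, $n$-exact, and reflects $n$-exact sequences by Theorems \ref{theoremmain3} and \ref{theoremmain2}.

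The central observation is that although the rows are only required to consist of weak kernels (case (ii)) or weak cokernels (case (i)), their images in $\mathcal{L}_2(\mathcal{M}',\mathcal{G})$ will be genuinely exact at each interior term. Indeed, $\widetilde{H}$ is simultaneously left and right $n$-exact, so Lemma \ref{lemm4} applies in whichever of its two forms is appropriate: in case (i) part (ii) of that lemma turns each weak-cokernel triple $X^{i-1} \rightarrow X^i \rightarrow X^{i+1}$ into an exact three-term sequence in $\mathcal{L}_2(\mathcal{M}',\mathcal{G})$, and in case (ii) part (i) of the lemma does the same for weak-kernel triples. Applying $\widetilde{H}$ therefore produces a commutative ladder in the abelian category $\mathcal{L}_2(\mathcal{M}',\mathcal{G})$ whose two rows are exact at $\widetilde{H}(X^1), \widetilde{H}(X^2), \widetilde{H}(X^3)$ and similarly on the bottom row.

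Next I check the remaining hypotheses of the ordinary five-lemma on the image diagram. The morphisms $\widetilde{H}(f^1)$ and $\widetilde{H}(f^3)$ are isomorphisms because functors preserve isomorphisms. The epimorphism $f^0$ extends to an $n$-exact sequence using an $n$-kernel and axiom (A3); applying $\widetilde{H}$ yields an exact sequence ending with $\widetilde{H}(X^0) \rightarrow \widetilde{H}(Y^0) \rightarrow 0$, so $\widetilde{H}(f^0)$ is an epimorphism. Dually, using axiom (A2) and an $n$-cokernel, $\widetilde{H}(f^4)$ is a monomorphism. The classical five-lemma in the abelian category $\mathcal{L}_2(\mathcal{M}',\mathcal{G})$ now forces $\widetilde{H}(f^2)$ to be an isomorphism. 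Fullness of $\widetilde{H}$ lifts its inverse to a morphism $g:Y^2 \rightarrow X^2$ in $\mathcal{M}'$, and faithfulness confirms that $g$ is a two-sided inverse of $f^2$.

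The step I expect to require the most care is packaging the rows of weak kernels or weak cokernels into configurations to which Lemma \ref{lemm4} applies uniformly in both cases (i) and (ii); after that is done, the rest reduces transparently to the classical five-lemma together with the fully faithful $n$-exact embedding.
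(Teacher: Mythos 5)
Your proposal follows essentially the same route as the paper's proof: pass to a small $n$-abelian subcategory generated by the diagram, use Lemma \ref{lemm4} together with the $n$-exact embedding $\widetilde{H}$ to make both rows exact in the abelian category $\mathcal{L}_2(\mathcal{M}',\mathcal{G})$, and then invoke the classical five-lemma and full faithfulness. The only slip is one of variance: since $\widetilde{H}$ sends $X$ to $\mathcal{M}(X,-)$ it is contravariant, so $\widetilde{H}(f^0)$ is in fact a monomorphism and $\widetilde{H}(f^4)$ an epimorphism rather than the other way around; as all arrows of the ladder are reversed as well, this does not affect the application of the five-lemma or the conclusion.
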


\begin{theorem}$\textbf{((n+2)}\times \textbf{(n+2)-Lemma).}$
Let $\mathcal{M}$ be an $n$-abelian category and
 \begin{center}
\begin{tikzpicture}
\node (X1) at (-2,3) {$A^{1,1}$};
\node (X2) at (0,3) {$A^{1,2}$};
\node (X3) at (2,3) {$\ldots$};
\node (X4) at (4,3) {$A^{1,n+2}$};
\node (X5) at (-2,1) {$A^{2,1}$};
\node (X6) at (0,1) {$A^{2,2}$};
\node (X7) at (2,1) {$\ldots$};
\node (X8) at (4,1) {$A^{2,n+2}$};
\node (X9) at (-2,-1) {$\vdots$};
\node (X10) at (0,-1) {$\vdots$};
\node (X11) at (2,-1) {$\vdots$};
\node (X12) at (4,-1) {$\vdots$};
\node (X13) at (-2,-3) {$A^{n+2,1}$};
\node (X14) at (0,-3) {$A^{n+2,2}$};
\node (X15) at (2,-3) {$\ldots$};
\node (X16) at (4,-3) {$A^{n+2,n+2}$};
\draw [->,thick] (X1) -- (X2) node [midway,above] {};
\draw [->,thick] (X2) -- (X3) node [midway,above] {};
\draw [->,thick] (X3) -- (X4) node [midway,above] {};
\draw [->,thick] (X5) -- (X6) node [midway,above] {};
\draw [->,thick] (X6) -- (X7) node [midway,above] {};
\draw [->,thick] (X7) -- (X8) node [midway,above] {};
\draw [->,thick] (X13) -- (X14) node [midway,above] {};
\draw [->,thick] (X14) -- (X15) node [midway,above] {};
\draw [->,thick] (X15) -- (X16) node [midway,right] {};
\draw [->,thick] (X1) -- (X5) node [midway,right] {};
\draw [->,thick] (X5) -- (X9) node [midway,right] {};
\draw [->,thick] (X9) -- (X13) node [midway,right] {};
\draw [->,thick] (X2) -- (X6) node [midway,right] {};
\draw [->,thick] (X6) -- (X10) node [midway,right] {};
\draw [->,thick] (X10) -- (X14) node [midway,right] {};
\draw [->,thick] (X4) -- (X8) node [midway,right] {};
\draw [->,thick] (X8) -- (X12) node [midway,right] {};
\draw [->,thick] (X12) -- (X16) node [midway,right] {};
\end{tikzpicture}
\end{center}
be a commuting diagram in $\mathcal{M}$ such that all columns are $n$-exact. Then $n+1$ of
the $n+2$ rows are $n$-exact sequences implies the remaining row is also $n$-exact.
\begin{proof}
The proof is similar to the proof of Theorem \ref{Theorem5}.
\end{proof}
\end{theorem}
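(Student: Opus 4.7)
The plan is to mimic the strategy already used in the proof of Theorem~\ref{Theorem5}: transport the whole diagram to the abelian category $\mathcal{L}_2(\mathcal{M},\mathcal{G})$ via the functor $\widetilde{H}$, invoke the classical $(n+2)\times(n+2)$-lemma available in any abelian category, and then pull the conclusion back using that $\widetilde{H}$ reflects $n$-exactness.

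More concretely, I first apply $\widetilde{H}:\mathcal{M}\rightarrow \mathcal{L}_2(\mathcal{M},\mathcal{G})$ to the given commutative diagram. By Theorem~\ref{theoremmain3} the functor $\widetilde{H}$ is $n$-exact, so it sends every one of the $n+2$ exact columns and every one of the $n+1$ assumed exact rows to an exact sequence of the form
\begin{equation}
0\longrightarrow \widetilde{H}(X^{n+1})\longrightarrow \widetilde{H}(X^n)\longrightarrow \cdots \longrightarrow \widetilde{H}(X^1)\longrightarrow \widetilde{H}(X^0)\longrightarrow 0 \notag
\end{equation}
in $\mathcal{L}_2(\mathcal{M},\mathcal{G})$. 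We are thus left with a commutative $(n+2)\times(n+2)$ diagram in the abelian category $\mathcal{L}_2(\mathcal{M},\mathcal{G})$ in which all $n+2$ columns are exact and $n+1$ of the $n+2$ rows are exact.

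Second, I invoke the classical $(n+2)\times(n+2)$-lemma for abelian categories, which asserts that under exactly these hypotheses the remaining row is exact as well. This is a standard homological fact: one proof proceeds by induction on $n$ using the ordinary $3\times 3$-lemma applied to well-chosen subdiagrams; alternatively, viewing the diagram as a double complex with exact columns gives a vanishing column-first spectral sequence, which via the row-first spectral sequence forces the cohomology of the remaining row to be zero. Either way, we obtain that $\widetilde{H}$ applied to the remaining row is a fully exact sequence in $\mathcal{L}_2(\mathcal{M},\mathcal{G})$.

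Finally, by Theorem~\ref{theoremmain2}(i) this exactness forces the remaining row in $\mathcal{M}$ to be right $n$-exact, and by Theorem~\ref{theoremmain2}(ii) it is also left $n$-exact; hence it is $n$-exact, which is what we needed. The only non-bookkeeping ingredient in the argument is the classical $(n+2)\times(n+2)$-lemma in abelian categories, and this is the main obstacle in the sense that it is the one step whose proof has substance; however, since it is a purely abelian-categorical statement, it is already available in the literature and the rest of the argument is a direct transportation through Theorems~\ref{theoremmain3} and \ref{theoremmain2}.
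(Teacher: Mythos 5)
Your proposal is correct and follows essentially the same route as the paper: apply $\widetilde{H}$, use its $n$-exactness (Theorem~\ref{theoremmain3}) to get a diagram in the abelian category $\mathcal{L}_2(\mathcal{M},\mathcal{G})$ with exact columns and $n+1$ exact rows, invoke the classical $(n+2)\times(n+2)$-lemma there, and pull the conclusion back with Theorem~\ref{theoremmain2}. The only point you omit, which the paper's proof of Theorem~\ref{Theorem5} handles explicitly, is that since the theorem is stated for a not necessarily small $\mathcal{M}$ one must first pass to a small $n$-abelian subcategory containing the diagram (obtained by closing under $n$-kernels and $n$-cokernels) before the embedding $\widetilde{H}$ is available.
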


The following proposition was proved in \cite[Proposition 4.8]{J}. As an application of our main result in Section $4$, we give a much easier proof.

\begin{proposition}
Let $\mathcal{M}$ be an $n$-abelian category. Consider the following commutative diagram
\begin{diagram}\label{dia3}
\centering
\begin{tikzpicture}
\node (X1) at (-4,2) {$X^0$};
\node (X2) at (-2,2) {$X^1$};
\node (X3) at (0,2) {$\cdots$};
\node (X4) at (2,2) {$X^n$};
\node (X5) at (4,2) {$X^{n+1}$};
\node (X6) at (-4,0) {$Y^0$};
\node (X7) at (-2,0) {$Y^1$};
\node (X8) at (0,0) {$\cdots$};
\node (X9) at (2,0) {$Y^n$};
\draw [->,thick] (X1) -- (X2) node [midway,above] {$d_X^0$};
\draw [->,thick] (X2) -- (X3) node [midway,above] {$d_X^1$};
\draw [->,thick] (X3) -- (X4) node [midway,above] {$d_X^{n-1}$};
\draw [->,thick] (X4) -- (X5) node [midway,above] {$d_X^n$};
\draw [->,thick] (X6) -- (X7) node [midway,above] {$d_Y^0$};
\draw [->,thick] (X7) -- (X8) node [midway,above] {$d_Y^1$};
\draw [->,thick] (X8) -- (X9) node [midway,above] {$d_Y^{n-1}$};
\draw [->,thick] (X1) -- (X6) node [midway,right] {$f^0$};
\draw [->,thick] (X2) -- (X7) node [midway,right] {$f^1$};
\draw [->,thick] (X4) -- (X9) node [midway,right] {$f^n$};
\end{tikzpicture}
\end{diagram}
where the top row is an $n$-exact sequence and $d_Y^0$ is a monomorphism. Then the following statements are equivalent:
\begin{itemize}
\item[(i)]
The diagram is an $n$-pushout diagram.
\item[(ii)]
The mapping cone of the diagram is an $n$-exact sequence.
\item[(iii)]
The diagram is both an $n$-pushout and an $n$-pullback diagram.
\item[(iv)]
There exists a commutative diagram
 \begin{center}
\begin{tikzpicture}
\node (X1) at (-4,2) {$X^0$};
\node (X2) at (-2,2) {$X^1$};
\node (X3) at (0,2) {$\cdots$};
\node (X4) at (2,2) {$X^n$};
\node (X5) at (4,2) {$X^{n+1}$};
\node (X6) at (-4,0) {$Y^0$};
\node (X7) at (-2,0) {$Y^1$};
\node (X8) at (0,0) {$\cdots$};
\node (X9) at (2,0) {$Y^n$};
\node (X10) at (4,0) {$X^{n+1}$};
\draw [->,thick] (X1) -- (X2) node [midway,above] {$d_X^0$};
\draw [->,thick] (X2) -- (X3) node [midway,above] {$d_X^1$};
\draw [->,thick] (X3) -- (X4) node [midway,above] {$d_X^{n-1}$};
\draw [->,thick] (X4) -- (X5) node [midway,above] {$d_X^n$};
\draw [->,thick] (X6) -- (X7) node [midway,above] {$d_Y^0$};
\draw [->,thick] (X7) -- (X8) node [midway,above] {$d_Y^1$};
\draw [->,thick] (X8) -- (X9) node [midway,above] {$d_Y^{n-1}$};
\draw [->,thick] (X9) -- (X10) node [midway,above] {$d_Y^n$};
\draw [->,thick] (X1) -- (X6) node [midway,right] {$f^0$};
\draw [->,thick] (X2) -- (X7) node [midway,right] {$f^1$};
\draw [->,thick] (X4) -- (X9) node [midway,right] {$f^n$};
\draw [double,-,thick ] (X5) -- (X10) node [midway,right] {$id$};
\end{tikzpicture}
\end{center}
whose rows are $n$-exact sequences.
\end{itemize}
\begin{proof}
The equivalence of $(i)$, $(ii)$ and $(iii)$ is obvious.

$(i)\Rightarrow(iv)$. Since $\widetilde{H}:\mathcal{M}\rightarrow \mathcal{L}_2(\mathcal{M},\mathcal{G})$ is an embedding functor, for any $X\in \mathcal{M}$ we denote $\widetilde{H}(X)$ by $X$. Consider the Diagram 5.1. By $(ii)$
\begin{equation}
X^0\overset{d_C^0}{\longrightarrow} X^1\oplus Y^0 \overset{d_C^1}{\longrightarrow} \ldots \overset{d_C^{n-1}}{\longrightarrow} X^n\oplus Y^{n-1}\overset{d_C^n}{\longrightarrow} Y^n \notag
\end{equation}
is an $n$-exact sequence and applying $\widetilde{H}$ to it we get the following exact sequence in $\mathcal{L}_2(\mathcal{M},\mathcal{G})$.
\begin{equation}
0\rightarrow Y^n\overset{\widetilde{H}(d_C^n)}{\longrightarrow} X^n\oplus Y^{n-1} \overset{\widetilde{H}(d_C^{n-1})}{\longrightarrow} \ldots \overset{\widetilde{H}(d_C^1)}{\longrightarrow} X^1\oplus Y^{0}\overset{\widetilde{H}(d_C^0)}{\longrightarrow} Y^0\rightarrow0 \notag
\end{equation}
Since the top row is exact in $\mathcal{L}_2(\mathcal{M},\mathcal{G})$ and the mapping cone is exact, the bottom row is also exact. Therefore, by Theorem \ref{theoremmain2}, the bottom row is an $n$-exact sequence in $\mathcal{M}$.

$(iv)\Rightarrow(ii)$ is similar.
\end{proof}
\end{proposition}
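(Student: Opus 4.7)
The plan is to leverage the embedding $\widetilde{H}\colon\mathcal{M}\to\mathcal{L}_2(\mathcal{M},\mathcal{G})$ established in Theorems \ref{theoremmain3} and \ref{theoremmain2}, which is $n$-exact and reflects $n$-exact sequences. This reduces the proposition to exactness questions for finite diagrams in the abelian category $\mathcal{L}_2(\mathcal{M},\mathcal{G})$, where classical homological algebra is available.

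The equivalences $(i)\Leftrightarrow(ii)\Leftrightarrow(iii)$ are essentially tautological: $(i)$ says the mapping cone is right $n$-exact, while both $(ii)$ and $(iii)$ amount to the mapping cone being $n$-exact (both right and left $n$-exact). Thus $(ii)\Leftrightarrow(iii)$ is by definition and $(iii)\Rightarrow(i)$ is trivial. For $(i)\Rightarrow(ii)$, the first differential $\begin{pmatrix}-d_X^0\\ f^0\end{pmatrix}\colon X^0\to X^1\oplus Y^0$ of the mapping cone is automatically a monomorphism (since $d_X^0$ is one, by $n$-exactness of the top row), and right $n$-exactness says the remaining terms form an $n$-cokernel; axiom $(A2)$ of $n$-abelian categories then promotes this to $n$-exactness.

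The core work lies in $(i)\Leftrightarrow(iv)$. For $(i)\Rightarrow(iv)$: applying the $n$-exact functor $\widetilde{H}$, the top row becomes an exact sequence in $\mathcal{L}_2(\mathcal{M},\mathcal{G})$ with zeros on both ends, and by the equivalence $(i)\Leftrightarrow(ii)$ just established the mapping cone also yields an exact sequence in $\mathcal{L}_2(\mathcal{M},\mathcal{G})$. A diagram chase in the abelian category, using the exactness of both the top row and the mapping cone, shows that the image of the bottom row is exact once we append the natural cokernel of $\widetilde{H}(d_Y^{n-1})$, and that this cokernel is canonically isomorphic to $\widetilde{H}(X^{n+1})$. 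Fullness of $\widetilde{H}$ then produces the missing morphism $d_Y^n\colon Y^n\to X^{n+1}$ in $\mathcal{M}$, and the reflection property (Theorem \ref{theoremmain2}) transports the exactness of the resulting bottom row back to $n$-exactness in $\mathcal{M}$. For $(iv)\Rightarrow(ii)$ the argument is dual: applying $\widetilde{H}$ to the two rows of the extended diagram gives two exact rows in $\mathcal{L}_2(\mathcal{M},\mathcal{G})$ that agree on the rightmost term, and a standard computation (a snake-lemma argument, or the long exact sequence associated to the mapping cone of a morphism of exact complexes) gives exactness of the mapping cone in $\mathcal{L}_2(\mathcal{M},\mathcal{G})$. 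Theorem \ref{theoremmain2} then reflects this to $n$-exactness of the mapping cone in $\mathcal{M}$, i.e., $(ii)$.

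The hardest step will be the diagram chase in $\mathcal{L}_2(\mathcal{M},\mathcal{G})$ for $(i)\Rightarrow(iv)$: although universal properties naturally suggest the appended cokernel is $\widetilde{H}(X^{n+1})$, verifying this identification carefully, so that the induced morphism descends to $\mathcal{M}$ via fullness of $\widetilde{H}$, requires simultaneously keeping track of exactness of the top row and of the mapping cone. Once this canonical identification is secured, the reflection property of Theorem \ref{theoremmain2} closes the argument cleanly.
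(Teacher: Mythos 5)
Your proposal is correct and follows essentially the same route as the paper: reduce everything to the abelian category $\mathcal{L}_2(\mathcal{M},\mathcal{G})$ via the $n$-exact, fully faithful, reflecting embedding $\widetilde{H}$, carry out the classical diagram arguments there, and transport the conclusion back with Theorem \ref{theoremmain2}. You are in fact more explicit than the paper on two points it treats as obvious --- the axiom (A2) argument for $(i)\Rightarrow(ii)$ and the construction of $d_Y^n$ via fullness --- with the only caveat that, since $\widetilde{H}$ is contravariant, the end term to be identified with $\widetilde{H}(X^{n+1})$ is the kernel, not the cokernel, of $\widetilde{H}(d_Y^{n-1})$.
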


\section*{acknowledgements}
The authors would like to thank the referee for a careful reading of this paper and
making helpful suggestions that improved the presentation of the paper. The research of the second
author was in part supported by a grant from IPM (No. 99170412).

\end{document}